\newtheorem{thm}{Theorem}[section]
\newtheorem{lma}[thm]{Lemma}
\newtheorem{cor}[thm]{Corollary}
\newtheorem{defn}[thm]{Definition}
\newtheorem{prop}[thm]{Proposition}
\newtheorem{rem}[thm]{Remark}
\newtheorem{eg}[thm]{Example}
\newcommand{\hd}{\dim_\textup{H}}
\newcommand{\bd}{\dim_\textup{B}}
\newcommand{\ubd}{\overline{\dim}_\textup{B}}
\newcommand{\lbd}{\underline{\dim}_\textup{B}}
\newcommand{\qad}{\dim_\textup{qA}}
\newcommand{\ad}{\dim_\textup{A}}
\newcommand{\eps}{\varepsilon}
\newcommand{\R}{\mathbb{R}}
\newcommand{\N}{\mathbb{N}}
\providecommand{\I}{\mathcal{I}}
\newcommand{\ii}{\textbf{\emph{i}}}
\newcommand{\jj}{\textbf{\emph{j}}}
\renewcommand{\epsilon}{\varepsilon}
\title{The box dimensions  of exceptional \\ self-affine sets in $\mathbb{R}^3$}
\author{Jonathan M. Fraser \& Natalia Jurga}
\begin{document}

\date{}

\maketitle

\begin{abstract}
We study the box dimensions  of   self-affine sets in $\mathbb{R}^3$ which are generated by a finite collection of generalised permutation matrices.  We obtain bounds for the dimensions which hold with very minimal assumptions and give rise to sharp results in many cases.  There are many issues in extending the well-established planar theory to  $\mathbb{R}^3$ including that the principal planar projections are (affine distortions of) self-affine sets with overlaps (rather than self-similar sets)  and that the natural modified singular value function fails to be sub-multiplicative in general.  We introduce several new techniques to deal with these issues and hopefully provide some insight into the challenges in extending the theory further.
\\ \\
\emph{Mathematics Subject Classification} 2010:  primary: 28A80, 37C45, secondary: 15A18.
\\ \\
\emph{Key words and phrases}: self-affine set, box  dimension,  singular values.
\end{abstract}

\section{Introduction} \label{intro}

Due to two recent breakthroughs in the dimension theory of fractals, it is particularly timely to gain a better understanding of exceptional self-affine sets, especially in $\mathbb{R}^3$:

\begin{enumerate}
\item Falconer's seminal work from the 1980s established formulae for the Hausdorff and box dimensions of arbitrary self-affine sets which held `almost surely' in a natural sense, see \cite{falconer1,falconer2}.  However, it has emerged in the last few years that these generic formulae actually hold \emph{surely} outside of a very small family of exceptions, see \cite{barany, hochman}.  These exceptions include the systems we consider here: IFSs generated by generalised permutation matrices.
\item `Carpet like' self-affine sets in $\mathbb{R}^3$ (which are the special case of the systems we consider where all the matrices are diagonal) have recently been used to provide counterexamples to a famous open conjecture in the dimension theory of dynamical systems, thus indicating the subtlety of this class of set.  In particular, see \cite{das} where a self-affine set in $\mathbb{R}^3$ is constructed which does not have an invariant measure of maximal Hausdorff dimension, see \cite{kenyon}.
\end{enumerate}

The main aim of this paper is to investigate the box dimensions of self-affine sets generated by generalised permutation matrices, which we will call \emph{self-affine sponges}. There are two major obstacles that arise in the 3-dimensional setting and not in the 2-dimensional setting:

\begin{enumerate}
\item The natural (modified) singular value function is not generally sub- or super- multiplicative. This prevents standard methods for: (a) proving that the pressure exists going through and (b) comparing the pressure with the value that the (modified) singular value function takes at points in the natural partition for the box dimension, which is necessary for obtaining lower bounds on the box dimension.
\item The natural formulae for the box dimensions of exceptional self-affine sets in any dimension rely on knowledge of the projections onto principal subspaces.  In the 2-dimensional setting the principal 1-dimensional projections are (graph-directed) self-similar sets, but in the 3-dimensional setting the  principal 2-dimensional projections are more difficult to handle for two reasons.  First, they are overlapping (graph-directed)  self-affine sets and it is not known if the box dimensions even exist, and secondly, because of the affine distortion, one needs precise control on the covering number of not just the principal 2-dimensional projections, but also certain affine images thereof.  
\end{enumerate}

In this paper we provide two new techniques, which partially deal with the issues described above:

\begin{enumerate}
\item In order to deal with the lack of multiplicativity we pass to an appropriately chosen subsystem which both `carries' the pressure and dimension and satisfies the required multiplicativity.
\item In order to deal with the affine distortion in the principal 2-dimensional projections we introduce a new covering technique for cylinders based on the quasi-Assouad dimension of the principal 1-dimensional projections, see Lemma \ref{keycover1}. 
\end{enumerate}

Our main results are bounds for the box dimensions of self-affine sponges which arise from natural pressure functions.  In certain cases we obtain precise results, in particular if the group generated by the induced permutations on the coordinate axes  is $S_3$.  We also provide a surprising example in the case where the rotational components are trivial (the defining matrices are diagonal) where the upper box dimension is strictly smaller than the value predicted by extending the planar theory to $\mathbb{R}^3$.  Additionally, we provide a simple example of a planar non-overlapping self-affine set of the type studied in \cite{fraser} whose box dimension is \emph{not} continuous in the linear parts of the defining affinities. This contrasts with the result of Feng and Shmerkin \cite{fengshmerkin} which indicates that `generically' the dimension of a self-affine set \emph{is} continuous in the linear parts of the defining affinities.

The remainder of the paper is structured as follows. In section \ref{prel} we provide some preliminaries. In Section \ref{sponges} we introduce various classes of self-affine sponges. In Section \ref{dimformula} we introduce the modified singular value functions which are natural for our setting and prove the existence of the associated pressure functions. Section \ref{results} contains the main results of the paper and Sections \ref{notation} to \ref{dimdrop} contain proofs of these results. In Section \ref{discont} we provide an example of a planar system whose box dimension is discontinuous in the linear parts of the defining affinities. 

\section{Preliminaries} \label{prel}

Given a subset $F \subset \R^d$ and $\delta>0$, let $N_{\delta}(F)$ denote the minimum number of $d$-dimensional boxes of sidelength $\delta$ needed to cover $F$. The upper box dimension of $F$ is then defined as
$$\ubd F= \limsup_{ \delta \to 0} \frac{\log N_{\delta}(F)}{-\log \delta}$$
and the lower box dimension of $F$ is defined as
$$\lbd F= \liminf_{\delta \to 0}  \frac{\log N_{\delta}(F)}{-\log \delta}.$$
If the limits in these expressions exist we call the common value $\bd F= \ubd F=\lbd F$ the box dimension of $F$. We also let $\hd F$ denote the Hausdorff dimension of $F$.

Let $\{S_i\}_{i \in \mathcal{I}}$ be a finite collection of affine contracting maps $S_i : \R^d \to \R^d$, which we call an iterated function system (IFS). It is well-known that there exists a unique, non-empty, compact set $F$ such that
$$F=\bigcup_{i \in \mathcal{I}} S_i(F).$$
We say that $F$ is a self-affine set and call it the attractor of $\{S_i\}_{i \in \mathcal{I}}$.  To avoid trivialities we assume $\mathcal{I}$ consists of at least two maps.

Let $\mathcal{I}^*=\bigcup_{n=1}^{\infty} \mathcal{I}^n$ denote the set of all finite words over $\mathcal{I}$. For $\ii \in \I^*$ let $|\ii|$ denote the length of the word $\ii$. For $n \in \N$ and $\ii \in \I^*$ with $|\ii| >n$ let $\ii|_n$ denote the truncation of $\ii$ to its first $n$ digits. Given $\ii=i_1 \ldots i_n \in \mathcal{I}^n$, let $S_{\ii}:=S_{i_1} \circ \cdots \circ S_{i_n}$. For $1 \leq j \leq d$, let $\alpha_j(\ii)$ denote the $j$th singular value of the linear part of $S_{\ii}$. We also denote $\alpha_{min}:= \min\{\alpha_d(i)\}_{i \in \mathcal{I}}$ and $\alpha_{max}:= \max\{\alpha_1(i)\}_{i \in \mathcal{I}}$. For $s \geq 0$, define $\phi^s: \mathcal{I}^* \to \R$ as
\begin{equation*}
\phi^s(\ii)= \left\{ 
\begin{array}{ccc}
\alpha_1(\ii) \cdots \alpha_{\lfloor s \rfloor}(\ii) \alpha_{\lfloor s \rfloor +1}(\ii)^{s-\lfloor s \rfloor}  &\textnormal{if}&  0 \leq s \leq d \\
(\alpha_1(\ii) \cdots \alpha_d(\ii))^{\frac{s}{d}}&\textnormal{if}& s \geq d
\end{array} \right. 
\end{equation*}
In \cite{falconer}, Falconer showed if the Lipschitz constant of each map $S_i$ is less than $1/2$ (this includes an improvement of Solomyak \cite{solomyak}) then for Lebesgue typical translations, $\hd F= \bd F= \min\{d, s_0\}$ where $s_0$ was defined as the unique solution to
$$\lim_{n \to \infty} \left( \sum_{\ii \in \mathcal{I}^n} \phi^{s_0}(\ii) \right)^{\frac{1}{n}}=1,$$
where the limit above exists by submultiplicativity of $\phi^s$. We call $\min\{d, s_0\}$ the affinity dimension of $F$. In other words, the affinity dimension is the almost sure (in terms of the translations) value for the Hausdorff and box dimensions of a self-affine set.

However, it has since transpired that in the planar setting, the affinity dimension is in fact the sure value of the Hausdorff and box dimensions of a self-affine set, outside of a very small family of exceptions. In particular, in recent results of B\'ar\'any, Hochman and Rapaport \cite{barany} followed by Hochman and Rapaport \cite{hochman} it has been shown in the planar case that if the collection of linear parts of $\{S_i\}_{i \in \mathcal{I}}$ are totally irreducible, meaning that there is no finite collection of lines which are left invariant under the linear parts, and additionally the IFS satisfies an exponential separation condition, then $\hd F= \bd F= \min\{2, s_0\}$. An analogous result is expected to hold in $\R^3$.

The self-affine sets we will consider are not totally irreducible and belong to the family of exceptions for which the generic formula does not hold.  Similar classes of exceptional self-affine sets are very well-studied, especially in the plane.  The simplest and most fundamental class of examples is the family of self-affine carpets introduced by Bedford and McMullen in the 1980s \cite{bedford, mcmullen}.  Here the affine maps act on the plane and   share a common linear part, which is a diagonal matrix. Bedford and McMullen computed the box and Hausdorff dimensions of these sets and notably these can be distinct and strictly smaller than the affinity dimension.  Since then various other classes have been considered with increasing complexity. In the  Lalley-Gatzouras family \cite{lalley-gatz}   the linear parts are allowed to vary but an alignment condition (where the eigenspace corresponding to the largest eigenvalue is common)  is preserved.  In the Bara\'nski family \cite{baranski} the alignment condition is relaxed but a grid structure is preserved forcing the principal projections to be self-similar sets satisfying the open set condition.  In the Feng-Wang family \cite{fengaffine} the grid structure is removed.  Finally, in the family considered by Fraser \cite{fraser} the matrices can be both diagonal and anti-diagonal.  In particular, this family is irreducible (no subspaces are preserved)  but not totally irreducible (the union of the two principal subspaces \emph{is} preserved) and   is the 2-dimensional analogue of insisting that the matrices are generalised permutation matrices, which is the class we consider in this paper in $\mathbb{R}^3$.

The theory of exceptional ``carpet like'' self-affine sets is unsurprisingly  less well-developed in $\mathbb{R}^3$.  Kenyon and Peres \cite{kenyon} considered the analogue of the Bedford-McMullen family in $\mathbb{R}^d$ and computed the box and Hausdorff dimensions, but there are very few results outside of this. Generally the word ``sponge'' is used instead of ``carpet'' in dimensions at least 3.

\section{Self-affine sponges} \label{sponges}

Let $\{S_i\}_{i \in \mathcal{I}}$ be a finite set of affine contractions on $\mathbb{R}^3$ such that the linear part of each $S_i$ is a  generalised permutation matrix. Without loss of generality we assume that $S_i([0,1]^3) \subset [0,1]^3$ for each $i \in \mathcal{I}$. We will call its attractor $F$ a \emph{self-affine sponge}. These will be the objects of study throughout the rest of this paper.

Note that the linear part of each map $S_i$ corresponds to some permutation of the three axes. Let $G$ be the group generated by these permutations. 

Observe that for each $\ii \in \mathcal{I}^*$, there exists a permutation $\sigma$ of $\{1,2,3\}$ such that $r_{\ii}^{\sigma(1)} \geq r_{\ii}^{\sigma(2)} \geq r_{\ii}^{\sigma(3)}$, where $r_{\ii}^j$ denotes the contraction seen by the $j$th direction under $S_{\ii}$. We call this an ordering of the map $S_{\ii}$ and note a map may admit multiple orderings since $\sigma$ may not be unique (if the contraction in two directions is equal). Given $\ii \in \mathcal{I}^*$, we call $S_{\ii}([0,1]^3)$ a cylinder (in the construction of $F$). We define the ordering of the cylinder $S_{\ii}([0,1]^3)$ to be the ordering of the map $S_{\ii}$. Thus a cylinder may also have multiple orderings. We say that the orderings of two maps or cylinders agree if they both have an ordering which coincide, and we say that they disagree if they do not possess orderings which coincide.

We will be particularly interested in the following classes of self-affine sponge.

\begin{eg}[$S_3$-sponges]
Suppose $G \cong S_3$, where $S_3$ denotes the symmetric group of order 3. In this setting, the IFS is irreducible, meaning that no one-dimensional or two-dimensional linear subspace of $\R^3$ is preserved by the linear parts of the maps in the IFS. We call the attractor of this IFS an $S_3$-sponge.   \label{eg1}
\end{eg}

One should think of $S_3$-sponges as the $3$-dimensional analogue of the planar carpets considered by Fraser \cite{fraser} when there is at least one diagonal and anti-diagonal matrix.

\begin{eg}[Ordered sponges] \label{eg2}
Suppose the linear part of each map in the IFS is a diagonal matrix
$$A_i=\begin{pmatrix}a^1_i & 0&0 \\0&a^2_i&0\\0&0&a^3_i \end{pmatrix}.$$
We also assume that the set of matrices satisfies the co-ordinate ordering condition: there exists a permutation $\sigma$ of $\{1,2,3\}$ such that for all $i \in \mathcal{I}$, $|a_i^{\sigma(1)}|\geq |a_i^{\sigma(2)}| \geq |a_i^{\sigma(3)}|$. We call this an ordered IFS and we call the attractor an ordered sponge.
\end{eg}

One should think of ordered sponges as the $3$-dimensional analogue of the planar carpets considered by Lalley-Gatzouras  \cite{lalley-gatz} but without requirement to have a column structure.  There is also an intermediate case between the $S_3$ and ordered cases which we can handle.

\begin{eg}[$S_2$-partially ordered sponges] \label{eg2.5}
Suppose the linear part of each map in the IFS is either of the form
$$A_i=\begin{pmatrix}a^1_i & 0&0 \\0&a^2_i&0\\0&0&a^3_i \end{pmatrix} \qquad \text{or} \qquad A_i=\begin{pmatrix}0&a^1_i & 0 \\a^2_i&0&0\\0&0&a^3_i \end{pmatrix} $$
and there is at least one of each type.  Moreover, we assume that for all $i \in \mathcal{I}$,
\[
\min\{|a_i^{1}| , |a_i^{2}| \}\geq |a_i^{3}|.
\]
We call this an $S_2$-partially ordered IFS and we call the attractor an $S_2$-partially ordered sponge. (Note that $G \cong S_2$ in this case, and that the ordering is only partial).
\end{eg}

\begin{eg}[Ordered and separated sponges] \label{eg3}
Suppose we have an ordered IFS, so the maps are given by
$$S_i(\cdot)=\begin{pmatrix}a^1_i & 0&0 \\0&a^2_i&0\\0&0&a^3_i \end{pmatrix}(\cdot)+\begin{pmatrix} t_i^1 \\ t_i^2 \\t_i^3\end{pmatrix}$$
and $\sigma$ is the permutation from example \ref{eg2}. Let $\{T_i\}_{i \in \mathcal{I}}$ be the planar IFS associated to the maps
$$T_i(\cdot)=\begin{pmatrix}a^{\sigma(1)}_i & 0 \\0&a^{\sigma(2)}_i\end{pmatrix}(\cdot)+\begin{pmatrix} t_i^{\sigma(1)} \\ t_i^{\sigma(2)} \end{pmatrix}.$$
and $\{U_i\}_{i \in \mathcal{I}}$ be the  IFS acting on the line associated to the maps $U_i(\cdot)= a^{\sigma(1)}_i  (\cdot)+  t_i^{\sigma(1)} $.
Suppose that for all $i, j \in \mathcal{I}$, either $T_i((0,1)^2) \cap T_j((0,1)^2)= \emptyset$ or $T_i((0,1)^2) =  T_j((0,1)^2)$, and similarly, either $U_i((0,1)) \cap U_j((0,1))= \emptyset$ or $U_i((0,1)) = U_j((0,1))$.

  In this case we say that $\{S_i\}_{i \in \mathcal{I}}$ is ordered and separated and call its attractor an ordered and separated sponge.
\end{eg}

One should think of ordered and separated sponges as the $3$-dimensional analogue of the planar carpets considered by Lalley-Gatzouras  \cite{lalley-gatz}, although the ordered and separated family is more general since non-trivial reflections are permitted.

It is natural and expected that the projections of $F$ onto the principal 1- and 2-dimensional subspaces will play a key role in the dimension theory of $F$.  However, in the $\mathbb{R}^3$ case these projections are more difficult to handle than in the planar case.  Before introducing the key quantities, we need to briefly recall the quasi-Assouad dimension.  This is a notion of dimension introduced   by L\"u and Xi \cite{LuXi}, and is  defined by $\qad  E= \lim_{\eps \to 0} h_E(\eps)$ where, for $\eps \in (0,1)$,
\begin{eqnarray*}
h_E(\eps)\ = \    \inf \Bigg\{ \  \alpha &:& \text{     there exists a constant $C >0$ such that,} \\
&\,& \hspace{3mm}  \text{for all $0<r\leq R^{1+\eps}<1$ and $x \in E$ we have } \\ 
&\,&\hspace{24mm}  \text{$ N_r\big( B(x,R) \cap E \big) \ \leq \ C \bigg(\frac{R}{r}\bigg)^\alpha$ } \Bigg\}.
\end{eqnarray*}
The quasi-Assouad dimension leaves an `exponential gap' between the scales $r$ and $R$, which in some settings makes it rather different from the more well-known  Assouad dimension, $\ad$.  In general one has $\ubd E \leq \qad E \leq \ad E$ for bounded subsets of $\mathbb{R}^d$.

\begin{defn}[Principal one-dimensional projections]
Let $\ii \in \mathcal{I}^*$. We define:
\begin{enumerate}[(a)]
\item  $\pi_\ii^1$ to be the projection  onto the 1-dimensional subspace parallel to the longest edges of the cuboid $S_\ii([0,1]^3)$.  If this does not uniquely determine a 1-dimensional subspace then select one of the possibilities arbitrarily.
\item  $\overline{p_1}(\ii) , \underline{p_1}(\ii)\in [0,1]$ be given by $\underline{p_1}(\ii) = \bd \pi_\ii^1F$, noting that this exists since $\pi_\ii^1F$ is a (graph-directed) self-similar set, and $\overline{p_1}(\ii) = \qad \pi_\ii^1F$.
\end{enumerate}
\end{defn}

\begin{defn}[Principal two-dimensional projections]
Let $\ii \in \mathcal{I}^*$. We define:
\begin{enumerate}[(a)]
\item $\pi_\ii^2$ to be the projection  onto the 2-dimensional subspace perpendicular  to the shortest edges of the cuboid $S_\ii([0,1]^3)$.  If this does not uniquely determine a 2-dimensional subspace then select one of the possibilities arbitrarily.
\item   $\overline{p_2}(\ii) , \underline{p_2}(\ii)\in [0,2]$ be given by $\overline{p_2}(\ii)  = \ubd \pi_\ii^2F$ and $\underline{p_2}(\ii)  = \lbd \pi_\ii^2F$. 
\end{enumerate}
\end{defn}

Note that we do not know if $\overline{p_2}(\ii) = \underline{p_2}(\ii)$ although this would follow from a very special case of the well-known and difficult conjecture that the box dimension of any (graph-directed) self-affine set exists, irrespective of overlaps. We also do not know if $\overline{p_1}(\ii) = \underline{p_1}(\ii)$ although this would also follow from a conjecture that the box dimension and quasi-Assouad dimension  of any (graph-directed) self-similar set coincide, irrespective of overlaps.  This is known to be true in many special cases,  which are useful to us.  For example, it holds if the defining system satisfies the \emph{graph-directed weak separation property} (which includes the graph-directed open set condition) \cite{FraserOrponen} or if the  system has no \emph{super exponential concentration of cylinders}     \cite{fraseryu}.   In fact, a flawed proof of the full result  appeared in the original version of  \cite{Hare}.  Note that our use of quasi-Assouad dimension over Assouad dimension is important because self-similar sets may have distinct box and Assouad dimensions, see \cite{fraserrobinson}.

Also note that there are only three possible values for    each of $\overline{p_1}(\ii)$ $\underline{p_1}(\ii)$ $\overline{p_2}(\ii)$ and $\underline{p_2}(\ii)$, which are the  dimensions of the projection of $F$ onto the three principal 1-dimensional subspaces and the three principal 2-dimensional subspaces, respectively. 

Finally observe that $S_3$-sponges, $S_2$-partially ordered sponges and  ordered sponges all have  the property that $\overline{p_1}(\ii)$, $\underline{p_1}(\ii)$, $\overline{p_2}(\ii)$, $\underline{p_2}(\ii)$ are all independent of $\ii$ and therefore in these settings we can suppress the dependence on $\ii$ and write $\overline{p_1}$, $\underline{p_1}$, $\overline{p_2}$, $\underline{p_2}$. In the  $S_3$-sponges case, this follows by monotonicity of these dimensions since each co-ordinate projection contains a bi-Lipschitz image of every other  co-ordinate projection. For ordered sponges this follows from the co-ordinate ordering condition, which guarantees we will always project onto the same subspace.  In the $S_2$-partially ordered case it follows since the partial order means we are always projecting onto the same 2-dimensional subspace and one of two of the 1-dimensional subspaces.  Then the  $S_2$ property guarantees that the projections onto the two 1-dimensional subspaces have the same dimensions.  Also notice that ordered and  separated sponges have the property that $\underline{p_1}=\overline{p_1}$ and $\underline{p_2}=\overline{p_2}$, since the relevant 2-dimensional projection corresponds to a self-affine carpet that satisfies the ROSC and therefore the box dimension exists by \cite{fraser}, and the relevant  1-dimensional projection is a self-similar set satisfying the OSC and so has equal box and quasi-Assouad dimension.

\section{Modified singular value functions and pressure} \label{dimformula}

Our dimension results will be expressed in terms of a (modified) singular value function, as in \cite{fraser}. To this end, we now introduce the modified singular value function which is natural in our setting.

For $s\geq 0$ and $\textbf{\emph{i}} \in \mathcal{I}^*$, we define upper and lower \emph{modified singular value functions},  by
\begin{equation} \label{modsing}
\overline{\psi}^s\big(S_{\textbf{\emph{i}}}\big) = \alpha_1 (\textbf{\emph{i}})^{ \overline{p_1}(\ii)} \, \,  \alpha_2 (\textbf{\emph{i}})^ {\overline{p_2}(\ii)-\overline{p_1}(\ii)}\alpha_3(\ii)^{s-\overline{p_2}(\ii)},
\end{equation}
and
\begin{equation} \label{modsing}
\underline{\psi}^s\big(S_{\textbf{\emph{i}}}\big) = \alpha_1 (\textbf{\emph{i}})^{ \underline{p_1}(\ii)} \, \,  \alpha_2 (\textbf{\emph{i}})^ {\underline{p_2}(\ii)- \underline{p_1}(\ii)}\alpha_3(\ii)^{s-\underline{p_2}(\ii)},
\end{equation}
respectively. For $s\geq 0$ and $k \in \mathbb{N}$, we define 
\[
\overline{\Psi}_k^s= \sum_{\textbf{\emph{i}} \in \mathcal{I}^{k}} \overline{\psi}^s(S_{\textbf{\emph{i}}}) 
\]
and
\[
\underline{\Psi}_k^s= \sum_{\textbf{\emph{i}} \in \mathcal{I}^{k}} \underline{\psi}^s(S_{\textbf{\emph{i}}}) .
\]
We would now like to define upper and lower `pressure functions' as $\overline{P}(s)=\lim_{k \to \infty} (\overline{\Psi}_k^s)^{\frac{1}{k}}$ and $\underline{P}(s)=\lim_{k \to \infty} (\underline{\Psi}_k^s)^{\frac{1}{k}}$. Usually, this is possible as a result of the sub-multiplicativity or super-multiplicativity properties of the associated singular value function, see for instance \cite{falconer1, fraser}. However, in general our modified singular value functions do not have to be neither sub-multiplicative nor super-multiplicative for certain values of $s$, see Remark \ref{mult}. Instead we can guarantee existence of the pressure by passing to an appropriate subsystem where the modified singular value function is multiplicative. The following lemma guarantees the existence of a multiplicative subsystem that `carries the pressure', which will allow us to prove the existence of $\overline{P}(s)$ and $\underline{P}(s)$.

\begin{lma} \label{carries lemma}
Fix $s>0$. For all $N \in \N$, there exist $\overline{\Gamma}, \underline{\Gamma} \subset \mathcal{I}^{N+m}$ for some $0 \leq m \leq 3$ such that:
  \begin{eqnarray}
 \overline{\psi}^s(\ii\jj)=\overline{\psi}^s(\ii)\overline{\psi}^s(\jj) & &\forall \ii, \jj \in \overline{\Gamma} \nonumber\\
 \underline{\psi}^s(\ii\jj)=\underline{\psi}^s(\ii)\underline{\psi}^s(\jj) & &\forall \ii, \jj \in \underline{\Gamma} \label{multi}
\end{eqnarray}
and
 \begin{eqnarray}
\sum_{\ii \in \overline{\Gamma}} \overline{\psi}^s(\ii) &\geq& C \sum_{\ii \in \mathcal{I}^N} \overline{\psi}^s(\ii) \nonumber\\
\sum_{\ii \in \underline{\Gamma}} \underline{\psi}^s(\ii) &\geq& C \sum_{\ii \in \mathcal{I}^N} \underline{\psi}^s(\ii) \label{carries}
\end{eqnarray}
where $C$ depends only on $s$.
\end{lma}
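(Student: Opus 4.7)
The aim is to produce $\overline{\Gamma} \subseteq \mathcal{I}^{N+m}$ (and similarly $\underline{\Gamma}$) all of whose words share identity induced axis permutation and a common ordering of contractions. This will suffice for strict multiplicativity of $\overline{\psi}^s$: if $\ii', \jj' \in \overline{\Gamma}$ then $S_{\ii'\jj'}$ has diagonal linear part, $r_{\ii'\jj'}^k = r_{\ii'}^k r_{\jj'}^k$, and by monotonicity along the common ordering $\tau^*$ the concatenation again has identity permutation and ordering $\tau^*$; hence $\alpha_k(\ii'\jj') = \alpha_k(\ii')\alpha_k(\jj')$. Moreover, $\overline{p_1}(\cdot), \overline{p_2}(\cdot)$ depend only on the composition $\sigma_{(\cdot)} \circ \tau_{(\cdot)}$ (recording the directions of the longest and shortest edges of the cuboid image in the codomain), so these values agree across $\ii'$, $\jj'$, $\ii'\jj'$, and $\overline{\psi}^s(\ii'\jj') = \overline{\psi}^s(\ii')\overline{\psi}^s(\jj')$ follows at once.

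I would construct $\overline{\Gamma}$ by iterated pigeonholing. First partition $\mathcal{I}^N$ by the pair $(\sigma_\ii, \tau_\ii)$, yielding at most $6|G| \leq 36$ classes, and select a maximum-$\overline{\Psi}^s$-weight class $X = \mathcal{I}^N_{\sigma_*, \tau_*}$. A short case analysis of the possible subgroups of $S_3$ and their generating sets shows that for some $0 \leq m \leq 3$ there exists $\jj_0 \in \mathcal{I}^m$ with $\sigma_{\jj_0} = \sigma_*^{-1}$. The extended collection $\{\ii\jj_0 : \ii \in X\} \subseteq \mathcal{I}^{N+m}$ then consists entirely of words with identity induced permutation; partition this collection further by the ordering of the extended words (at most six classes) and retain the maximum-weight class as $\overline{\Gamma}$.

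The delicate step is the weight comparison, bounding $\overline{\psi}^s(\ii\jj_0)$ below in terms of $\overline{\psi}^s(\ii)$ uniformly in $\ii \in X$. Using the explicit formula $r_{\ii\jj_0}^j = r_\ii^{\sigma_{\jj_0}(j)} r_{\jj_0}^j$ together with a sorting argument valid for generalised permutation matrices, one finds $\alpha_k(\ii\jj_0) \in [\alpha_{min}^m \alpha_k(\ii), \alpha_{max}^m \alpha_k(\ii)]$. Writing $p_l = \overline{p_l}(\ii)$ and $p_l' = \overline{p_l}(\ii\jj_0)$, a direct simplification, exploiting the fact that the three exponents $(p_1, p_2-p_1, s-p_2)$ and $(p_1', p_2'-p_1', s-p_2')$ both sum to $s$, gives
\[
\frac{\overline{\psi}^s(\ii\jj_0)}{\overline{\psi}^s(\ii)} \;\asymp\; \left(\frac{\alpha_1(\ii)}{\alpha_2(\ii)}\right)^{p_1' - p_1} \left(\frac{\alpha_2(\ii)}{\alpha_3(\ii)}\right)^{p_2' - p_2},
\]
with implied constants depending only on $s, m, \alpha_{min}, \alpha_{max}$. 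The hardest point, which would otherwise be the main obstacle, is to show this ratio is uniformly bounded even when the exponents disagree. The resolution is the following observation: the $p_l$, $p_l'$ are determined by the directions of the longest and shortest edges of the image cuboid, so $p_1' \neq p_1$ forces $\tau_{\ii\jj_0}(1) \neq \sigma_*\tau_*(1)$, which is precisely the failure of the ``spread'' inequality $\alpha_1(\ii)/\alpha_2(\ii) \geq (\alpha_{max}/\alpha_{min})^m$ (a sufficiently large ratio of consecutive sorted contractions dominates the bounded $\jj_0$-perturbation and preserves the ordering direction). Symmetrically, $p_2' \neq p_2$ forces $\alpha_2(\ii)/\alpha_3(\ii) < (\alpha_{max}/\alpha_{min})^m$. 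In either case the displayed ratio is bounded by a constant depending only on $s, m$ and the finitely many possible $\overline{p_l}$-values, so the pigeonholes survive and deliver the required $C$. The argument for $\underline{\Gamma}$ and $\underline{\psi}^s$ is identical, with $\underline{p_k}$ replacing $\overline{p_k}$ throughout.
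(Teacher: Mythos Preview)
Your argument is correct and follows essentially the same route as the paper: pigeonhole $\mathcal{I}^N$ by induced permutation, append a bounded word $\jj_0$ to make the linear parts diagonal, then pigeonhole again by ordering; the weight comparison rests on the same key observation that a change in $\overline{p_l}$ after appending a word of bounded length forces the corresponding ratio $\alpha_l/\alpha_{l+1}$ to be uniformly bounded. The only inessential differences are that you pigeonhole on the pair $(\sigma_\ii,\tau_\ii)$ at the outset (the $\tau_\ii$ component is redundant, since you must re-pigeonhole by the ordering of $\ii\jj_0$ anyway), and that the paper isolates the ``small drop'' estimate $\underline{\psi}^s(\ii\jj)\geq c\,\underline{\psi}^s(\ii)$ as a standalone claim valid for \emph{all} $\ii$ and all $\jj$ of fixed length, which it then reuses in the proof that the pressure exists.
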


\begin{proof}
We will prove the result for $\underline{\psi}^s$, but the proof for $\overline{\psi}^s$ is almost identical. We begin by showing  that for all $s>0$ and  $M \in \N$, there exists $c>0$ that depends only on $s$ and $M$  such that for all $\jj \in \mathcal{I}^*$ with  $|\jj| = M$ and all $\ii \in \mathcal{I}^*$,
\begin{equation}
\underline{\psi}^s(\ii\jj) \geq c\underline{\psi}^s(\ii). \label{small drop}
\end{equation}
Fix $s>0$, $M \in \N$, $\jj \in \mathcal{I}^M$ and $\ii \in \mathcal{I}^*$. If $\underline{p_1}(\ii\jj)\geq \underline{p_1}(\ii)$, then 
\[
\left(\frac{\alpha_1(\ii\jj)}{\alpha_2(\ii\jj)}\right)^{\underline{p_1}(\ii\jj)} \geq \left(\frac{\alpha_1(\ii\jj)}{\alpha_2(\ii\jj)}\right)^{\underline{p_1}(\ii)}
\]
 and if $\underline{p_2}(\ii\jj) \geq \underline{p_2}(\ii)$, then 
\[
\left(\frac{\alpha_2(\ii\jj)}{\alpha_3(\ii\jj)}\right)^{\underline{p_2}(\ii\jj)} \geq \left(\frac{\alpha_2(\ii\jj)}{\alpha_3(\ii\jj)}\right)^{\underline{p_2}(\ii)}.
\]
On the other hand, it is easy to see that there exist constants $c_1, c_2>1$ which depend on $M$ but are independent of $\ii$ and $\jj$ such that if $\underline{p_1}(\ii\jj) \neq \underline{p_1}(\ii)$ then $\frac{\alpha_1(\ii\jj)}{\alpha_2(\ii\jj)} \leq c_1$ and if $\underline{p_2}(\ii\jj) \neq \underline{p_2}(\ii)$ then $\frac{\alpha_2(\ii\jj)}{\alpha_3(\ii\jj)} \leq c_2$.

Therefore, if $\underline{p_1}(\ii\jj)<\underline{p_1}(\ii)$ then
$$\left(\frac{\alpha_1(\ii\jj)}{\alpha_2(\ii\jj)}\right)^{\underline{p_1}(\ii\jj)}=\left(\frac{\alpha_1(\ii\jj)}{\alpha_2(\ii\jj)}\right)^{\underline{p_1}(\ii\jj)-\underline{p_1}(\ii)}\left(\frac{\alpha_1(\ii\jj)}{\alpha_2(\ii\jj)}\right)^{\underline{p_1}(\ii)} \geq \frac{1}{c_1} \left(\frac{\alpha_1(\ii\jj)}{\alpha_2(\ii\jj)}\right)^{\underline{p_1}(\ii)}$$
and if $\underline{p_2}(\ii\jj)<\underline{p_2}(\ii)$ then
$$\left(\frac{\alpha_2(\ii\jj)}{\alpha_3(\ii\jj)}\right)^{\underline{p_2}(\ii\jj)}=\left(\frac{\alpha_2(\ii\jj)}{\alpha_3(\ii\jj)}\right)^{\underline{p_2}(\ii\jj)-\underline{p_2}(\ii)}\left(\frac{\alpha_2(\ii\jj)}{\alpha_3(\ii\jj)}\right)^{\underline{p_2}(\ii)} \geq \frac{1}{c_2^2} \left(\frac{\alpha_2(\ii\jj)}{\alpha_3(\ii\jj)}\right)^{\underline{p_2}(\ii)}.$$
Therefore, for any $\ii \in \mathcal{I}^*$ and $\jj \in \mathcal{I}^M$,
\begin{eqnarray*}
\underline{\psi}^s(\ii\jj)&=&\left(\frac{\alpha_1(\ii\jj)}{\alpha_2(\ii\jj)}\right)^{\underline{p_1}(\ii\jj)}\left(\frac{\alpha_2(\ii\jj)}{\alpha_3(\ii\jj)}\right)^{\underline{p_2}(\ii\jj)} \alpha_3(\ii\jj)^s \\
&\geq& \frac{1}{c_1c_2^2}\left(\frac{\alpha_1(\ii\jj)}{\alpha_2(\ii\jj)}\right)^{\underline{p_1}(\ii)}\left(\frac{\alpha_2(\ii\jj)}{\alpha_3(\ii\jj)}\right)^{\underline{p_2}(\ii)} \alpha_3(\ii\jj)^s \\
&\geq& \frac{\alpha_{min}^{Ms}}{c_1c_2^2} \alpha_1(\ii)^{\underline{p_1}(\ii)}\alpha_2(\ii)^{\underline{p_2}(\ii)-\underline{p_1}(\ii)}\alpha_3(\ii)^{s-\underline{p_2}(\ii)} \ = \ \frac{\alpha_{min}^{Ms}}{ c_1c_2^2} \, \underline{\psi}^s(\ii),
\end{eqnarray*}
completing the proof of the claim \eqref{small drop}. 

Now, fix $N \in \N$. Observe that there exists $\Gamma_1 \subset \mathcal{I}^N$ such that each $\ii \in \Gamma_1$ corresponds to the same permutation of the axes (of which there are at most 6 possibilities) and
$$\sum_{\ii \in \Gamma_1}\underline{\psi}^s(\ii) \geq \frac{1}{6} \sum_{\ii \in \mathcal{I}^N} \underline{\psi}^s(\ii).$$

Next, observe that there exists $0 \leq m \leq 3$ such that for all $\ii \in \Gamma_1$ there exists $\jj(\ii) \in \mathcal{I}^m$ such that $A_{\ii\jj(\ii)}$ is a diagonal matrix. Denote $\Gamma_2=\{\ii\jj(\ii): \ii \in \Gamma_1\} \subset \mathcal{I}^{N+m}$. In particular, by taking $c$ to be the constant from (\ref{small drop}), $\underline{\psi}^s(\ii\jj(\ii)) \geq c \underline{\psi}^s(\ii)$ and therefore
$$\sum_{\ii \in \Gamma_2}\underline{\psi}^s(\ii) \geq c \sum_{\ii \in \Gamma_1}\underline{\psi}^s(\ii) \geq \frac{c}{6} \sum_{\ii \in \mathcal{I}^N} \underline{\psi}^s(\ii).$$

Finally, notice that there exists a subset $\underline{\Gamma} \subset \Gamma_2$ such that all the matrices in $\{A_{\ii}: \ii \in \underline{\Gamma}\}$ have the same ordering (of which there are at most 6 possibilities) and 
\begin{eqnarray}\sum_{\ii \in \underline{\Gamma}}\underline{\psi}^s(\ii) \geq \frac{1}{6} \sum_{\ii \in \Gamma_2}\underline{\psi}^s(\ii) \geq \frac{c}{36} \sum_{\ii \in \mathcal{I}^N}\underline{\psi}^s(\ii). \label{gamma bound}
\end{eqnarray}
Since all of the matrices in $\{A_{\ii}: \ii \in \underline{\Gamma}\}$ are diagonal and have the same ordering, $\underline{\psi}^s(\ii_1\ii_2)=\underline{\psi}^s(\ii_1)\underline{\psi}^s(\ii_2)$ for $\ii_1$, $\ii_2 \in \underline{\Gamma}$, so by (\ref{gamma bound}) the proof is complete choosing $C= \frac{c}{36}$.
\end{proof}

\begin{prop}[Existence of pressure functions]
For all $s>0$, the \emph{upper} and \emph{lower} \emph{pressure functions} $\overline{P}(s)=\lim_{k \to \infty} (\overline{\Psi}_k^s)^{\frac{1}{k}}$ and $\underline{P}(s)=\lim_{k \to \infty} (\underline{\Psi}_k^s)^{\frac{1}{k}}$ exist.
\label{pressure}
\end{prop}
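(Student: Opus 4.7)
The plan is to leverage Lemma \ref{carries lemma} as a surrogate for the (failing) submultiplicativity of the modified singular value functions and then run a Fekete-style argument. I focus on $\underline{P}(s)$; the argument for $\overline{P}(s)$ is identical. Trivially $\underline{\Psi}_k^s$ is sandwiched between two geometric sequences, so $L := \limsup_{k \to \infty} \bigl(\underline{\Psi}_k^s\bigr)^{1/k}$ lies in $(0, \infty)$, and it suffices to show $\liminf_{k \to \infty}\bigl(\underline{\Psi}_k^s\bigr)^{1/k} \geq L$.

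Fix $N \in \N$. Lemma \ref{carries lemma} supplies a set $\underline{\Gamma}_N \subset \mathcal{I}^{N+m_N}$ with $m_N \in \{0, 1, 2, 3\}$, on which $\underline{\psi}^s$ is multiplicative and which carries a constant fraction of $\underline{\Psi}_N^s$. Inspecting the construction in the proof of Lemma \ref{carries lemma}, the matrices $\{A_\ii : \ii \in \underline{\Gamma}_N\}$ are all diagonal with a common axis ordering, and this ordering (together with the associated values $\underline{p_1}(\cdot), \underline{p_2}(\cdot)$) is preserved under arbitrary concatenation of words in $\underline{\Gamma}_N$. A straightforward induction then extends the pairwise multiplicativity to any finite concatenation $\ii_1 \cdots \ii_k$ with each $\ii_j \in \underline{\Gamma}_N$. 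Summing over such concatenations and applying the constant-fraction bound yields the super-multiplicativity
\[
\underline{\Psi}_{kM}^s \ \geq \ \left(\sum_{\ii \in \underline{\Gamma}_N} \underline{\psi}^s(\ii)\right)^{\!k} \ \geq \ \bigl(C \underline{\Psi}_N^s\bigr)^k, \qquad M := N + m_N,
\]
for every $k \geq 1$.

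Next I interpolate from the arithmetic progression $(kM)_k$ to the full sequence of integers using the `small drop' inequality \eqref{small drop} proved inside Lemma \ref{carries lemma}. For $j = kM + r$ with $0 \leq r < M$, fixing any $\jj_0 \in \mathcal{I}^r$ and applying \eqref{small drop} to each summand gives $\underline{\Psi}_j^s \geq c(s, r) \underline{\Psi}_{kM}^s$. Since $r$ ranges over a bounded set and $k/j \to 1/M$ as $j \to \infty$, taking $j$th roots produces
\[
\liminf_{j \to \infty} \bigl(\underline{\Psi}_j^s\bigr)^{1/j} \ \geq \ \bigl(C\underline{\Psi}_N^s\bigr)^{1/(N+m_N)}.
\]

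Finally, I let $N \to \infty$ along a subsequence with $(\underline{\Psi}_N^s)^{1/N} \to L$. Since $m_N \leq 3$ is uniformly bounded and $L > 0$, the right-hand side of the previous display tends to $L$, and we conclude $\liminf \geq L$, which is exactly what we needed. The conceptual obstacle, and the motivation for Lemma \ref{carries lemma}, is the absence of multiplicativity for $\underline{\psi}^s$; once that lemma is available, the only remaining delicate point is the interpolation step, where \eqref{small drop} is indispensable for upgrading super-multiplicativity along $kM$ to a statement about all $j$.
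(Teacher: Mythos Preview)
Your argument is correct and follows essentially the same route as the paper: invoke Lemma~\ref{carries lemma} to obtain a multiplicative subsystem $\underline{\Gamma} \subset \mathcal{I}^{N+m}$ carrying a constant fraction of $\underline{\Psi}_N^s$, deduce $\underline{\Psi}_{k(N+m)}^s \geq (C\underline{\Psi}_N^s)^k$, interpolate to all integers via the small-drop inequality \eqref{small drop}, and then send $N \to \infty$. The only cosmetic difference is that the paper fixes $\epsilon>0$ and chooses $N$ with $(\underline{\Psi}_N^s)^{1/N} \geq L-\epsilon$, whereas you pass to a subsequence realising the $\limsup$; you also make explicit the extension of pairwise multiplicativity on $\underline{\Gamma}$ to arbitrary concatenations, which the paper uses without comment.
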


\begin{proof}
We will again only prove the proposition for $\underline{\psi}^s$, but the proof for $\overline{\psi}^s$ is almost identical. Fix $\epsilon>0$ and $N \in \N$ such that 
$$\left(\sum_{\ii \in \mathcal{I}^N} \underline{\psi}^s(\ii)\right)^{\frac{1}{N}} \geq \limsup_{n \to \infty} \left(\sum_{\ii \in \mathcal{I}^n}\underline{\psi}^s(\ii)\right)^{\frac{1}{n}}-\epsilon,$$
noting that $N$ can be chosen arbitarily large.

Let $C$ be the constant from Lemma \ref{carries lemma} and $\underline{\Gamma} \subset \mathcal{I}^{N+m}$ be chosen according to Lemma \ref{carries lemma} for some $0 \leq m \leq 3$. By (\ref{multi}) and (\ref{carries}), we deduce that for all $k \in \N$,
$$\sum_{\ii \in \underline{\Gamma}^k} \underline{\psi}^s(\ii)= \left(\sum_{\ii \in \underline{\Gamma}} \underline{\psi}^s(\ii)\right)^k \geq C^k \left(\sum_{\ii \in \mathcal{I}^N}\underline{\psi}^s(\ii) \right)^k$$
and therefore for all $k \in \N$,
\begin{eqnarray*}
\left(\sum_{\ii \in \mathcal{I}^{(N+m)k}} \underline{\psi}^s(\ii)\right)^{\frac{1}{(N+m)k}} &\geq& \left(\sum_{\ii \in \underline{\Gamma}^k}\underline{\psi}^s(\ii)\right)^{\frac{1}{(N+m)k}} \\
&\geq&C^{\frac{1}{N+m}} \left( \sum_{\ii \in \mathcal{I}^N}\underline{\psi}^s(\ii) \right)^{\frac{1}{N+m}} \\
&\geq&C^{\frac{1}{N+m}} \left(\limsup_{n \to \infty} \left(\sum_{\ii \in \mathcal{I}^n} \underline{\psi}^s(\ii)\right)^{\frac{1}{n}}-\epsilon\right)^{\frac{1}{1+\frac{m}{N}}}.
\end{eqnarray*}
In particular
$$\liminf_{k \to \infty} \left(\sum_{\ii \in \mathcal{I}^{(N+m)k}}\underline{\psi}^s(\ii)\right)^{\frac{1}{(N+m)k}} \geq C^{\frac{1}{N+m}} \left(\limsup_{n \to \infty} \left(\sum_{\ii \in \mathcal{I}^n} \underline{\psi}^s(\ii)\right)^{\frac{1}{n}}-\epsilon\right)^{\frac{1}{1+\frac{m}{N}}}.$$
We will now show that
$$\liminf_{k \to \infty} \left(\sum_{\ii \in \mathcal{I}^{(N+m)k}}\underline{\psi}^s(\ii)\right)^{\frac{1}{(N+m)k}} =\liminf_{n \to \infty}\left(\sum_{\ii \in \mathcal{I}^n}\underline{\psi}^s(\ii)\right)^{\frac{1}{n}}$$
which completes the proof. Observe that for any $n \in \N$ and $\ii \in \mathcal{I}^n$ there exists $0 \leq l < N+m$ such that $|\ii|_{n-l}|=(N+m)k'$ for some $k' \in \N$. By taking $c$ to be the constant from (\ref{small drop}) (which depends only on $N+m$), we have $\underline{\psi}^s(\ii) \geq c\underline{\psi}^s(\ii|_{n-l})$. In particular,
$$\sum_{\ii \in \mathcal{I}^n}\underline{\psi}^s(\ii) \geq  \sum_{\ii \in \mathcal{I}^n} c\underline{\psi}^s(\ii|_{n-l}) \geq c\sum_{\ii \in \mathcal{I}^{(N+m)k'}}\underline{\psi}^s(\ii).$$
It follows that 
\begin{eqnarray*}
\liminf_{n \to \infty} \left(\sum_{\ii \in \mathcal{I}^n}\underline{\psi}^s(\ii)\right)^{\frac{1}{n}} &\geq& \liminf_{k \to \infty} \left(c \sum_{\ii \in \mathcal{I}^{(N+m)k}} \underline{\psi}^s(\ii)\right)^{\frac{1}{(N+m)k}}\\
&=&\liminf_{k \to \infty} \left( \sum_{\ii \in \mathcal{I}^{(N+m)k}} \underline{\psi}^s(\ii)\right)^{\frac{1}{(N+m)k}}
\end{eqnarray*}
since $c$ is independent of $k$. 
Therefore 
$$\liminf_{n \to \infty}\left(\sum_{\ii \in \mathcal{I}^n}\underline{\psi}^s(\ii)\right)^{\frac{1}{n}} \geq C^{\frac{1}{N+m}} \left(\limsup_{n \to \infty} \left(\sum_{\ii \in \mathcal{I}^n}\underline{\psi}^s(\ii)\right)^{\frac{1}{n}}-\epsilon\right)^{\frac{1}{1+\frac{m}{N}}}$$
and since $\epsilon>0$ can be taken arbitrarily small and $N \in \N$ can be taken arbitrarily large it follows that $\limsup_{n \to \infty} \left(\sum_{\ii \in \mathcal{I}^n}\underline{\psi}^s(\ii)\right)^{\frac{1}{n}}= \liminf_{n \to \infty} \left(\sum_{\ii \in \mathcal{I}^n}\underline{\psi}^s(\ii)\right)^{\frac{1}{n}}$, which proves the result.
\end{proof}

It is easy to see that $\underline{P}(0) \in (1, \infty)$ and for all $s, \epsilon \geq 0$,
\begin{equation}
\alpha_{min}^{\epsilon} \underline{P}(s) \leq \underline{P}(s+\epsilon) \leq \alpha_{max}^{\epsilon}\underline{P}(s) \label{continuity} \end{equation}
and therefore $\underline{P}$ is continuous and strictly decreasing on $[0, \infty)$. By (\ref{continuity}), for sufficiently large $s$, $\underline{P}(s)<1$ and therefore there exists a unique value $\underline{s_0}>0$ such that $\underline{P}(\underline{s_0})=1$. By a similar argument we can define $\overline{s_0}>0$ to be the unique value such that $\overline{P}(\overline{s_0})=1$. 

\begin{rem}[Multiplicative properties of $\overline{\psi}^s$ and $\underline{\psi}^s$ for $S_3$-sponges] Let $F$ be an $S_3$-sponge. Recall that we say that a function $f$ on $\I^{\ast}$ is submultiplicative if for all $\ii, \jj \in \I^{\ast}$, $f(\ii\jj) \leq f(\ii)f(\jj)$, supermultiplicative if $f(\ii\jj) \geq f(\ii)f(\jj)$ and multiplicative if $f(\ii\jj)=f(\ii)f(\jj)$. Since $\alpha_1$ is a submultiplicative function, $\alpha_3$ is a supermultiplicative function, $\alpha_1\alpha_2\alpha_3$ is a multiplicative function and
\begin{eqnarray*}
\overline{\psi}^s(\ii)&=& (\alpha_1(\ii)\alpha_2(\ii)\alpha_3(\ii))^{\overline{p_2}-\overline{p_1}} \alpha_1(\ii)^{2\overline{p_1}-\overline{p_2}}\alpha_3(\ii)^{s+\overline{p_1}-2\overline{p_2}},
\end{eqnarray*}
it follows that $\overline{\psi}^s$ is submultiplicative when $0 \leq s \leq 2\overline{p_2}-\overline{p_1}$. In the special case that $\overline{p_2}=2\overline{p_1}$, $\overline{\psi}^s$ is supermultiplicative when $s > 2\overline{p_2}-\overline{p_1}$. However when $\overline{p_2} \neq 2\overline{p_1}$, $\overline{\psi}^s$ is not necessarily submultiplicative nor supermultiplicative for $s> 2\overline{p_2}-\overline{p_1}$. For a simple example of this fact, consider the IFS where the linear parts of the maps are the matrices
\[ A_1=\begin{pmatrix} \frac{1}{2}&0&0 \\ 0&0&-\frac{1}{5} \\ 0&\frac{1}{4}&0 \end{pmatrix} \quad \textrm{and} \quad A_2= \begin{pmatrix} 0&0&-\frac{1}{3} \\ 0&\frac{1}{7}&0 \\ \frac{1}{5}&0&0 \end{pmatrix}. \]
$A_1$ is a rotation about the $x$ axis by 90$^{\circ}$ and $A_2$ is a rotation about the $y$ axis by 90$^{\circ}$, therefore $G \cong S_3$. We have
\[
 A_1A_2= \begin{pmatrix} 0&0&-\frac{1}{6} \\ -\frac{1}{25}&0&0 \\ 0&\frac{1}{28}&0 \end{pmatrix} \quad \textrm{and} \quad  A_2A_1=\begin{pmatrix} 0&-\frac{1}{12}&0 \\ 0&0&-\frac{1}{35} \\ \frac{1}{10}&0&0 \end{pmatrix}.
\]
Since $\alpha_1(12)=\alpha_1(1)\alpha_1(2)=\frac{1}{6}$ and $\alpha_3(12)=\frac{1}{28}>\frac{1}{35}=\alpha_3(1)\alpha_3(2)$ it follows that $\overline{\psi}^s(12)> \overline{\psi}^s(1)\overline{\psi}^s(2)$ for any $s>2\overline{p_2}-\overline{p_1}$. On the other hand since $\alpha_3(21)=\alpha_3(2)\alpha_3(1)=\frac{1}{35}$ and $\alpha_1(21)=\frac{1}{10}<\frac{1}{6}=\alpha_1(2)\alpha_1(1)$ it follows that $\overline{\psi}^s(21) < \overline{\psi}^s(2)\overline{\psi}^s(1)$ for any $s> 2\overline{p_2}-\overline{p_1}$. That is, for any $s>2\overline{p_2}-\overline{p_1}$,
\begin{eqnarray}
\overline{\psi}^s(21)<\overline{\psi}^s(1)\overline{\psi}^s(2)<\overline{\psi}^s(12). \label{no mult}
\end{eqnarray}
The same analysis holds when considering the function $\underline{\psi}^s$ instead.
\label{mult}
\end{rem} 

\begin{rem}[Pressure functions for ordered sponges] \label{remarkorderedseparated}
When $F$ is an ordered sponge, by the co-ordinate ordering condition $\underline{\psi}^s$ and $\overline{\psi}^s$ are multiplicative and therefore $\underline{s_0}$ is the unique value satisfying
$$\underline{P}(\underline{s_0})=\sum_{i \in \mathcal{I}} \alpha_1(i)^{\underline{p_1}}\alpha_2(i)^{\underline{p_2}-\underline{p_1}} \alpha_3(i)^{\underline{s_0}-\underline{p_2}}=1$$
and
$\overline{s_0}$ is the unique value satisfying
$$\overline{P}(\overline{s_0})=\sum_{i \in \mathcal{I}} \alpha_1(i)^{\overline{p_1}}\alpha_2(i)^{\overline{p_2}-\overline{p_1}} \alpha_3(i)^{\overline{s_0}-\overline{p_2}}=1.$$
\end{rem}

\section{Main results} \label{results}

The following separation condition is the 3-dimensional analogue of the rectangular open set condition introduced in \cite{fengaffine}.

\begin{defn}
An IFS $\{S_i\}_{i\in \mathcal{I}}$ satisfies the cuboidal  open set condition (COSC) if there exists a non-empty open cuboid, $C = (a,b)\times(c,d)  \times(e,f)  \subset \mathbb{R}^3$, such that $\{S_i(C)\}_{i\in \mathcal{I}}$ are pairwise disjoint subsets of $C$.
\end{defn}

The following is our main result.

\begin{thm} \label{main1} 
If  $F$ is  a sponge, then
\[
\ubd F \leq \overline{s_0}.
\]
Moreover, if $F$ is an $S_3$-sponge, an $S_2$-partially ordered sponge, or an ordered sponge and, in addition, the COSC is satisfied, then
\[
\underline{s_0} \leq \lbd F \leq \ubd F \leq \overline{s_0}.
\]
In particular, if $\overline{p_1}=\underline{p_1}$  and $\overline{p_2}=\underline{p_2}$,  then the box dimension of $F$ exists and  $\bd F=\overline{s_0}=\underline{s_0}$.
\end{thm}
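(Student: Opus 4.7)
The plan is to prove the three assertions in order: first the unconditional upper bound $\ubd F \leq \overline{s_0}$, then the lower bound $\underline{s_0} \leq \lbd F$ under the $S_3$, $S_2$-partially ordered or ordered hypothesis together with the COSC, and finally the case of equality. The two main engines will be Proposition \ref{pressure} together with Lemma \ref{carries lemma} (which give a pressure that is usable via a multiplicative subsystem), and the anticipated Lemma \ref{keycover1}, which converts the quasi-Assouad dimension of the principal 1-dimensional projections into an anisotropic covering estimate for the principal 2-dimensional projections.

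For the upper bound, I would fix $s > \overline{s_0}$ and $\delta > 0$, and introduce the stopping set
$\Lambda(\delta) = \{\ii \in \mathcal{I}^* : \alpha_3(\ii) \leq \delta < \alpha_3(\ii|_{|\ii|-1})\}$,
so that $F = \bigcup_{\ii \in \Lambda(\delta)} S_\ii F$. Each cylinder $S_\ii([0,1]^3)$ is a cuboid with ordered sides $\alpha_1(\ii) \geq \alpha_2(\ii) \geq \alpha_3(\ii) \approx \delta$, and covering $S_\ii F$ by $\delta$-cubes is equivalent to covering the principal 2-dimensional projection $\pi_\ii^2 F$ by anisotropic rectangles of sides $(\delta/\alpha_1(\ii),\, \delta/\alpha_2(\ii))$. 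Since $\pi_\ii^2 F$ is a graph-directed self-affine set with overlaps, the naive self-similar covering bound is unavailable; instead I would apply Lemma \ref{keycover1} to extract a bound of the form $N_\delta(S_\ii F) \leq C_\epsilon\, \delta^{-s}\, \overline{\psi}^{s+\epsilon}(\ii)$ involving the quasi-Assouad dimension $\overline{p_1}(\ii)$ and the upper box dimension $\overline{p_2}(\ii)$. Summing over $\Lambda(\delta)$ after grouping by length and invoking that $\overline{\Psi}_k^{s+\epsilon}$ decays exponentially (since $\overline{P}(s+\epsilon)<1$ by Proposition \ref{pressure}) should give $N_\delta(F) \leq C_\epsilon \delta^{-s-\epsilon}$, and then $\epsilon \downarrow 0$ yields $\ubd F \leq \overline{s_0}$.

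For the lower bound, the hypotheses ensure that $\underline{p_1}(\ii) \equiv \underline{p_1}$ and $\underline{p_2}(\ii) \equiv \underline{p_2}$ are independent of $\ii$ and, in conjunction with the COSC, that the cylinders $\{S_\ii([0,1]^3) : \ii \in \mathcal{I}^k\}$ are pairwise disjoint. Fix $s < \underline{s_0}$ and pass to the multiplicative subsystem $\underline{\Gamma} \subset \mathcal{I}^{N+m}$ from Lemma \ref{carries lemma}: on $\underline{\Gamma}^k$ the function $\underline{\psi}^s$ is genuinely multiplicative, so $\sum_{\ii \in \underline{\Gamma}^k} \underline{\psi}^s(\ii)$ grows exponentially in $k$ by Proposition \ref{pressure}. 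For each $\ii \in \underline{\Gamma}^k$ I would obtain a lower bound on $N_{\alpha_3(\ii)}(S_\ii F)$ by combining the lower box dimension of the 2-dimensional projection with an anisotropic fibering in the longest direction, producing $\alpha_3(\ii)^s\, N_{\alpha_3(\ii)}(S_\ii F) \geq c\, \underline{\psi}^{s-\epsilon}(\ii)$. The COSC then lets these $\delta$-box counts be added over $\underline{\Gamma}^k$ without double-counting, giving $N_\delta(F) \geq c\delta^{-s}$ along a subsequence of scales, hence $\lbd F \geq \underline{s_0}$.

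The step I expect to be most delicate is this lower cylinder estimate, because the principal 2-dimensional projections are overlapping self-affine sets and a naive product-type lower bound is not obviously justified; the COSC on the 3-dimensional IFS does not directly imply any separation of these projections, so one must exploit the group structure or the partial order to recover enough separation to produce the anisotropic lower bound. Once both one-sided bounds are established, the concluding \textbf{in particular} statement is immediate: if $\overline{p_j} = \underline{p_j}$ for $j = 1, 2$ then $\overline{\psi}^s \equiv \underline{\psi}^s$, so $\overline{s_0} = \underline{s_0}$, and the sandwich $\underline{s_0} \leq \lbd F \leq \ubd F \leq \overline{s_0}$ collapses to $\bd F = \overline{s_0} = \underline{s_0}$.
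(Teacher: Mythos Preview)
Your upper bound argument is essentially the paper's: the stopping set $\Lambda(\delta)$ is the paper's $\mathcal{I}(\delta)$, Lemma \ref{keycover1} is invoked in the same way, and the sum $\sum_{\ii}\overline{\psi}^s(\ii)$ is controlled by $\sum_k \overline{\Psi}_k^s<\infty$ since $\overline{P}(s)<1$.

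The lower bound sketch has the right ingredients but contains two genuine gaps. First, obtaining $N_\delta(F)\geq c\delta^{-s}$ \emph{along a subsequence of scales} only yields $\ubd F\geq \underline{s_0}$, not $\lbd F\geq \underline{s_0}$; the lower box dimension needs the estimate at \emph{every} small $\delta$. Working directly on $\underline{\Gamma}^k$ is problematic because the values $\alpha_3(\ii)$ for $\ii\in\underline{\Gamma}^k$ are not comparable to a single scale. The paper instead keeps the full stopping set $\mathcal{I}(\delta)$ for every $\delta$ and proves separately (Lemma \ref{lb lma}) that $\sum_{\ii\in\mathcal{I}(\delta)}\underline{\psi}^s(\ii)\gtrsim_s 1$ uniformly in $\delta$; the multiplicative subsystem $\underline{\Gamma}$ from Lemma \ref{carries lemma} is used \emph{inside} that lemma (via a stopping argument within $\underline{\Gamma}^*$), not as a replacement for $\mathcal{I}(\delta)$. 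Second, you correctly flag the lower cylinder estimate as delicate, but the obstruction is not \emph{separation} of the 2-dimensional projections. When one packs $S_\ii^{-1}\pi_\ii^2 S_\ii F$ by $(\delta/\alpha_2(\ii))$-squares $Q$ and pushes forward by $S_\ii$, the deep cylinders inside a given $Q$ may have their long side \emph{orthogonal} to the long side of $S_\ii(Q)$, so the anisotropic fibering in the $\pi_\ii^1$ direction produces no gain. The paper's Lemma \ref{keycover2} uses the $S_3$/$S_2$/ordered hypothesis precisely to find a bounded-length word $\jj$ whose linear part swaps the two axes in $\pi_\ii^2\R^3$ while fixing the third, converting every ``badly aligned'' square into a ``well aligned'' one at uniformly bounded cost. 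It is this alignment trick, rather than any separation property of the projected IFS, that makes the group hypothesis essential and explains why the bound fails for general diagonal sponges (Theorem \ref{counter}).
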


A special case of the above result is when $\underline{p_1}=\overline{p_1}=1$ and $\underline{p_2}=\overline{p_2}=2$, in which case the box dimension is equal to the affinity dimension.

\begin{cor}
Let $F$ be an $S_3$-sponge, an $S_2$-partially ordered sponge, or an ordered sponge, which also  satisfies the COSC. If $\underline{p_1}=\overline{p_1}=1$ and $\underline{p_2}=\overline{p_2}=2$ then $\bd F$ is the affinity dimension of $F$.
\end{cor}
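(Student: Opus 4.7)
The plan is to observe that under the hypotheses of the corollary the modified singular value functions collapse onto Falconer's classical singular value function, so that Theorem \ref{main1} already identifies $\bd F$ with the affinity dimension, modulo verifying that the critical exponent lands in the correct range.

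First, I would substitute $\overline{p_1}=\underline{p_1}=1$ and $\overline{p_2}=\underline{p_2}=2$ into the definitions \eqref{modsing}, yielding
\[
\overline{\psi}^s(S_\ii)=\underline{\psi}^s(S_\ii)=\alpha_1(\ii)\,\alpha_2(\ii)\,\alpha_3(\ii)^{s-2},
\]
which is precisely Falconer's $\phi^s(\ii)$ on the range $s\in[2,3]$. Consequently, for $s$ in this range the upper and lower pressures of the previous section coincide with one another and with the Falconer pressure $P_F(s)=\lim_{n\to\infty}\bigl(\sum_{\ii\in\mathcal{I}^n}\phi^s(\ii)\bigr)^{1/n}$ used to define the affinity dimension.

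Next I would locate $\overline{s_0}=\underline{s_0}$ inside $[2,3]$. For the upper bound, observe that at $s=3$ we have $\overline{\psi}^3(\ii)=\alpha_1(\ii)\alpha_2(\ii)\alpha_3(\ii)=|\det L_\ii|$, where $L_\ii$ is the linear part of $S_\ii$. The COSC applied iteratively gives $\sum_{\ii\in\mathcal{I}^n}|\det L_\ii|\leq 1$ (the images $S_\ii(C)$ are pairwise disjoint subsets of $C$ with volume $|\det L_\ii|\,|C|$), hence $\overline{P}(3)\leq 1$ and strict monotonicity forces $\overline{s_0}\leq 3$. For the lower bound, note that the projection of $F$ onto the appropriate principal 2-dimensional subspace has box dimension $\overline{p_2}=\underline{p_2}=2$, so $\ubd F\geq 2$; combined with Theorem \ref{main1}, this yields $\overline{s_0}\geq\ubd F\geq 2$, and a symmetric argument using $\underline{p_2}=2$ and $\lbd F\geq 2$ handles $\underline{s_0}$ similarly (or alternatively, $\underline{s_0}=\overline{s_0}$ already by Step 1 since both pressure functions agree on $[2,3]$).

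Finally, since $s_0:=\overline{s_0}=\underline{s_0}\in[2,3]$ and the two pressure functions coincide with $P_F$ on this interval, $s_0$ is the unique zero of $P_F(s)-1$; because $s_0\leq 3$ we also have $\min\{3,s_0\}=s_0$, which is the affinity dimension. Theorem \ref{main1} then delivers
\[
\bd F=\overline{s_0}=\underline{s_0}=\min\{3,s_0\}=\text{affinity dimension of }F.
\]
There is no serious obstacle here beyond the bookkeeping: the main content is the observation that the exponents $1$ and $2$ force the exotic pressures of Section \ref{dimformula} to reduce to Falconer's, and that the COSC and the presence of a full-dimensional principal projection sandwich the critical exponent in the range where this reduction is valid.
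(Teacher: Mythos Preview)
Your proposal is correct and follows the same route the paper implicitly takes: the paper states the corollary without proof, treating it as an immediate specialisation of Theorem~\ref{main1}. You have simply made explicit the two points that need checking---namely, that the modified singular value functions collapse to Falconer's $\phi^s$ on $[2,3]$, and that the critical exponent actually lies in this range---and your arguments for both (projection monotonicity for the lower bound, COSC volume comparison for the upper bound) are valid.
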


It is natural to consider possible improvements in Theorem \ref{main1}.  It would be particularly useful to know that $\overline{p_1}=\underline{p_1}$  and $\overline{p_2}=\underline{p_2}$ in all cases.  Both of these statements are conjectured to be true, even in much more general settings than we require, but we are unable to prove either conjecture. However, there are many situations where either or both statements are true, but we decided not to state these situations as additional corollaries and leave their precise formulation to the interested reader.  For example, if the principal projections satisfy some separation condition (graph directed open set condition or weak separation condition in the $p_1$ case) then the statements hold.

As an example of the type of result we can get, we include one corollary where our result is very explicit.  This setting generalises the Lalley-Gatzouras family to $\mathbb{R}^3$. 

\begin{cor}
If $F$ is an ordered and  separated sponge that satisfies the COSC, then  $\bd F=\underline{s_0}=\overline{s_0}$ and moreover this common value is given by a closed form expression, see Remark \ref{remarkorderedseparated}. \label{lg}
\end{cor}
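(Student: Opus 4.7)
The plan is to reduce Corollary \ref{lg} directly to Theorem \ref{main1} combined with the information already recorded about the principal projections of ordered and separated sponges. Since an ordered and separated sponge is, by definition, a special case of an ordered sponge, and we are assuming the COSC, Theorem \ref{main1} immediately yields
\[
\underline{s_0} \;\leq\; \lbd F \;\leq\; \ubd F \;\leq\; \overline{s_0}.
\]
So the only remaining task is to collapse the sandwich by verifying $\underline{p_1}=\overline{p_1}$ and $\underline{p_2}=\overline{p_2}$, after which Theorem \ref{main1} itself supplies the conclusion $\bd F = \underline{s_0} = \overline{s_0}$.

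Next I would invoke the two observations about the projections that were already noted in the discussion at the end of Section \ref{sponges}. For the principal 1-dimensional projection, after the ordering permutation this is the attractor of the self-similar system $\{U_i\}_{i\in\I}$; the separation hypothesis that distinct $U_i((0,1))$ are either equal or disjoint reduces to the OSC on the quotient self-similar system, which is one of the special cases (graph-directed OSC) where the box and quasi-Assouad dimensions are known to coincide, by \cite{FraserOrponen}. Thus $\underline{p_1}=\overline{p_1}$. For the principal 2-dimensional projection, after the ordering permutation this is the attractor of the planar self-affine system $\{T_i\}_{i\in\I}$; the analogous separation hypothesis on $\{T_i\}$ gives the ROSC required to apply \cite{fraser}, from which it follows that the box dimension exists, i.e.\ $\underline{p_2}=\overline{p_2}$.

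Combining the three steps, $\underline{\psi}^s \equiv \overline{\psi}^s$, whence $\underline{s_0}=\overline{s_0}$ and the sandwich from Theorem \ref{main1} gives $\bd F = \underline{s_0}=\overline{s_0}$. The closed form expression then comes for free from Remark \ref{remarkorderedseparated}: because the IFS is ordered, both modified singular value functions are multiplicative and $\underline{s_0}$ is the unique solution of
\[
\sum_{i \in \I} \alpha_1(i)^{\underline{p_1}}\,\alpha_2(i)^{\underline{p_2}-\underline{p_1}}\,\alpha_3(i)^{\underline{s_0}-\underline{p_2}} \;=\; 1.
\]
There is no real analytic obstacle here beyond bookkeeping, because Theorem \ref{main1} does all the heavy lifting and the separation conditions in the definition of an ordered and separated sponge were tailored precisely so that the required statements about the principal projections can be quoted directly from \cite{fraser} and the standard self-similar theory. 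The only point that requires a moment of care is verifying that the separation of the $U_i$ images and the $T_i$ images really implies the OSC and ROSC respectively after passing to the quotient that identifies coincident images; this is routine but worth stating explicitly in the write-up.
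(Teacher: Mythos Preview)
Your proposal is correct and matches the paper's own (implicit) argument: the paper does not give a separate proof of this corollary, but relies on exactly the observations you spell out, namely that an ordered and separated sponge is an ordered sponge satisfying the COSC (so Theorem \ref{main1} applies), and that the separation hypotheses force $\underline{p_1}=\overline{p_1}$ via the OSC on the self-similar 1-dimensional projection and $\underline{p_2}=\overline{p_2}$ via the ROSC on the planar self-affine projection \cite{fraser}, as recorded at the end of Section \ref{sponges}. Your remark about passing to the quotient system to obtain the OSC/ROSC is a reasonable extra care that the paper leaves implicit.
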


Clearly some sort of separation condition is required and so the appearance of the COSC in Theorem \ref{main1} to obtain lower bounds is innocuous. The most striking assumption we make in Theorem \ref{main1} is the requirement that the sponge be either $S_3$, $S_2$-partially ordered, or ordered.  In particular, this excludes the extension of the important Bara\'nski family to $\mathbb{R}^3$  where all of the matrices are diagonal, but the system is not ordered.  It was of great surprise to us that in fact our lower bound  does \emph{not} hold in this setting.

\begin{thm} \label{counter}
There exists a self-affine sponge generated by diagonal matrices and satisfying the COSC such that
$$\ubd F < \underline{s_0}=\overline{s_0}.$$
\end{thm}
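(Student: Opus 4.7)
The plan is to exhibit an explicit, small IFS of diagonal contractions on $[0,1]^3$ satisfying the COSC, and show for it that the modified pressure overshoots the true upper box dimension. A natural candidate is a symmetric two-map IFS: $S_1$ with diagonal linear part $\mathrm{diag}(\alpha,\beta,\gamma)$ satisfying $\alpha>\beta>\gamma$, and $S_2$ with $\mathrm{diag}(\gamma,\beta,\alpha)$, arranged in $[0,1]^3$ so that the COSC holds and the attractor is symmetric under $x\leftrightarrow z$. This system is neither ordered, $S_2$-partially ordered, nor $S_3$, so it lies outside Theorem \ref{main1}. To ensure the equality $\overline{s_0}=\underline{s_0}$ required by the statement, I would choose the translations so that the three coordinate-axis projections of $F$ are self-similar sets satisfying the OSC and the three coordinate-plane projections are self-affine carpets (Lalley--Gatzouras or Bara\'nski type) satisfying the ROSC, so that box and quasi-Assouad dimensions coincide for each projection and $\overline{\psi}^s\equiv\underline{\psi}^s$ on every word.

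For the upper bound on $\ubd F$, I would use the direct product cover: for any partition $\{a,b\}\sqcup\{c\}$ of $\{x,y,z\}$, the containment $F\subseteq \pi^{ab}(F)\times\pi^c(F)$ yields
\[
\ubd F\leq \min_{c}\bigl(\ubd\pi^{ab}(F)+\ubd\pi^c(F)\bigr).
\]
The crucial point is that this bound uses a single \emph{fixed} global coordinate splitting, whereas $\overline{\psi}^s(\ii)$ assigns a principal plane to each word $\ii$ according to the ordering of its singular values. In particular the splitting $c=y$, i.e.\ via $\pi^{xz}(F)$ and $\pi^y(F)$, should be the most efficient one: $y$ is the intermediate direction under \emph{both} orderings present in the IFS, and so is not the preferred choice for any single cylinder.

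For the lower bound on $\overline{s_0}$, the sum $\overline{\Psi}_n^s$ collects contributions from all $2^n$ words in $\mathcal{I}^n$, and for each word the modified singular value function uses the principal 2D projection appropriate to that word's ordering -- in the symmetric construction these are exactly $\pi^{xy}(F)$ and $\pi^{yz}(F)$, depending on whether more copies of $S_1$ or $S_2$ appear. I would estimate $\overline{\Psi}_n^s$ at the target value $s=\ubd\pi^{xz}(F)+\ubd\pi^y(F)$ by exploiting the binomial multiplicity of the $2^n$ mixed words and showing that the resulting sum grows faster than $1$ in $n$, which would force $\overline{s_0}>s\geq\ubd F$. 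The main obstacle is carefully choosing the numerical parameters $(\alpha,\beta,\gamma)$ and the translations so that simultaneously (i) the COSC, OSC and ROSC all hold, (ii) the Lalley--Gatzouras/Bara\'nski dimensions of the planar projections are known explicitly, and (iii) the pressure sum at the chosen $s$ does exceed $1$. A continuity/monotonicity argument in the symmetric one-parameter family should isolate an open region of valid choices, but producing the explicit numerical certificate is the delicate final step.
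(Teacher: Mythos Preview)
Your proposal is genuinely different from the paper's construction, and as you yourself concede, it stops short of the decisive verification. Two specific points are worth flagging.

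First, your description of which projections enter $p_1(\ii)$ is inaccurate in the regime you need. For $y$ never to be the \emph{shortest} direction (so that $\pi_\ii^2$ is always $\pi^{xy}$ or $\pi^{yz}$ and never $\pi^{xz}$) you must take $\beta>\sqrt{\alpha\gamma}$. But then for any word $\ii$ with $k$ ones among $n$ letters and $k/n$ close to $1/2$, the $y$-side $\beta^n$ exceeds both $\alpha^k\gamma^{n-k}$ and $\gamma^k\alpha^{n-k}$, so $y$ is the \emph{longest} direction and $p_1(\ii)=\bd\pi^y F=q_1$, not $p_1$. Since the binomial mass concentrates at $k/n=1/2$, this regime dominates the pressure, contrary to your sketch.

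Second, the whole argument rests on the unproved inequality $s_0>q_1+q_2$, for which you offer only a hope of a continuity argument. In the borderline case $\beta=\sqrt{\alpha\gamma}$ one can compute the pressure explicitly: writing $u=(\alpha/\gamma)^{(p_1+p_2-s)/2}$ a large-deviations calculation gives $P(s)=\beta^s\max\{2,\,u+u^{-1}\}$, so either $s_0=q_1$ (when $q_1\ge p_1+p_2$), which rules out $s_0>q_1+q_2$, or $s_0<p_1+p_2$, where the desired inequality reduces to a delicate comparison involving $q_2=\bd\pi^{xz}F$ that you have not addressed. Without a single verified numerical instance, the proposal is not a proof.

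The paper proceeds entirely differently. It uses an $(N+1)$-map system: $N$ maps share linear part $\mathrm{diag}(a,b,1/N)$ and one map has $\mathrm{diag}(c,d,1/N)$, with $1/N<c<b<a<d=1-b$ and $N$ large. The $xy$-projection is then a two-map Bara\'nski carpet $F'$ of box dimension $1$, so $p_2=1$ for every word. The mechanism is a quantitative \emph{misalignment}: a Cram\'er large-deviations estimate shows that all but a $\gamma^\kappa$-fraction of $\gamma$-boxes covering $F'$ meet at most two cylinders of width $\le\gamma^\beta$ for some $\beta>1$ (tall and thin, because the dominant Lyapunov direction in $F'$ is vertical), whereas typical cylinders $S_\ii([0,1]^3)$ have their long side horizontal (because of the $N$-fold multiplicity of the first linear part). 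Using this, the good cylinders are covered with far fewer $\delta$-cubes than $\psi^s(\ii)$ predicts, and a direct combinatorial bound handles the bad cylinders, yielding $\ubd F<s_0$ for explicit parameters. No global product bound is used.
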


\section{Some notation} \label{notation}

Throughout the rest of the paper we will write $A \lesssim B$ to mean there exists a uniform constant $c>0$ (depending only on parameters fixed by the hypotheses of the result being proved, such as the IFS) such that $A \leq c B$.  If  we wish to emphasise that $c$ depends on something else, not fixed by the hypotheses such as $\varepsilon$, then we write $A \lesssim_\varepsilon B$.  Similarly, we write $A \gtrsim B$ or $A \gtrsim_\varepsilon B$ to mean $B \lesssim A$ or $B \lesssim_\varepsilon A$, respectively.  Finally, we write $A \approx B$ if $A \lesssim B$ and $A \gtrsim B$.

When proving Theorem \ref{main1}, the basic idea is to cover cylinders $S_{\ii}([0,1]^3)$  separately  where the family of cylinders is chosen such that the smallest side of the cube $S_{\ii}([0,1]^3)$ is roughly the same scale as the cover.  This means that covering the cylinder is similar to covering an affine image of the projection of $F$ onto one of the principal 2-dimensional subspaces.  To this end, we introduce $\delta$-stoppings or `cut sets' as follows.  Given $\delta \in (0,1)$
\[
\mathcal{I}(\delta) = \{ \ii \in \mathcal{I}^* : \alpha_3(\ii) \leq \delta < \alpha_3(\ii^-) \}
\]
where, for $\ii = i_1 i_2 \cdots i_k$,  $\ii^- =  i_1 i_2 \cdots i_{k-1}$.

\section{Proof of upper bound in Theorem \ref{main1}} \label{ub}

In this section we prove the upper bound in Theorem \ref{main1} for which we require no assumptions on our sponge at all.  In particular, any type of sponge is permitted and no separation conditions are required.

\subsection{Covering lemma}

\begin{lma} \label{keycover1}
Fix $\eps \in (0,1)$.  For $\delta \in (0,1)$ and $\ii \in \mathcal{I}(\delta)$, we have
\[
N_\delta(S_\ii(F)) \lesssim_\eps \left( \frac{ \alpha_1(\ii)}{\alpha_2(\ii)} \right)^{\overline{p_1}(\ii)} \left( \frac{ \alpha_2(\ii)}{ \alpha_3(\ii)}\right)^{\overline{p_2}(\ii) }\alpha_3(\ii)^{-\eps}.
\]
\end{lma}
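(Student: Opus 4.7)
The plan is to reduce the 3D covering to a 2D covering of a principal projection of $F$, and then control this 2D cover by a two-case argument mixing the upper box dimension of the 2D projection with the quasi-Assouad dimension of the 1D projection.

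First, since $\ii\in\mathcal{I}(\delta)$, we have $\alpha_3(\ii)\leq\delta<\alpha_3(\ii^-)\leq\alpha_3(\ii)/\alpha_{min}$, so $\delta\asymp\alpha_3(\ii)$ and, absorbing a constant, I may assume $\delta=\alpha_3(\ii)$. The cuboid $S_\ii([0,1]^3)$ has shortest edge of length $\alpha_3(\ii)\leq\delta$, so any cover of its projection $\pi_\ii^2(S_\ii F)$ by $\delta$-squares inflates to a cover of $S_\ii F$ by $\delta$-boxes with only bounded multiplicative loss, giving $N_\delta(S_\ii F)\lesssim N_\delta(\pi_\ii^2(S_\ii F))$. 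Since $A_\ii$ is a generalised permutation matrix, $\pi_\ii^2\circ S_\ii$ factors (up to translation) as $T\circ\pi$, where $\pi$ is a coordinate projection of $F$ onto a principal $2$-dimensional subspace of upper box dimension $\overline{p_2}(\ii)$ and $T$ is a planar affine map with singular values $\alpha_1(\ii),\alpha_2(\ii)$. Pulling $\delta$-squares back through $T$ reduces the problem to bounding
\[
N_{\alpha_3/\alpha_1,\,\alpha_3/\alpha_2}(\pi F),
\]
where this notation counts axis-aligned $(\alpha_3/\alpha_1)\times(\alpha_3/\alpha_2)$ rectangles.

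Write $A=\alpha_1(\ii)/\alpha_2(\ii)$ and $B=\alpha_2(\ii)/\alpha_3(\ii)$, and fix $\eta>0$ small (to be chosen in terms of $\eps$). I split into two cases. In the \emph{quasi-Assouad regime} $A\geq B^\eta$, I cover $\pi F$ by $(\alpha_3/\alpha_2)$-squares: since $\ubd\pi F=\overline{p_2}(\ii)$, there are $\lesssim_\eta B^{\overline{p_2}(\ii)+\eta}$ of them. Inside each non-empty such square, the direction-$1$ projection sits in a $1$-dimensional ball of radius $\alpha_3/\alpha_2$ in $\pi_\ii^1 F$; the quasi-Assouad condition $r\leq R^{1+\eta}$ with $R=\alpha_3/\alpha_2$, $r=\alpha_3/\alpha_1$ rearranges exactly to $A\geq B^\eta$, so the quasi-Assouad dimension $\overline{p_1}(\ii)$ of $\pi_\ii^1 F$ applies and gives $\lesssim_\eta A^{\overline{p_1}(\ii)+\eta}$ non-empty sub-rectangles per square. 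Multiplying yields $\lesssim_\eta A^{\overline{p_1}(\ii)}B^{\overline{p_2}(\ii)}\cdot A^\eta B^\eta$. In the \emph{degenerate regime} $A<B^\eta$, I use the crude subdivision bound $N_{\alpha_3/\alpha_1,\,\alpha_3/\alpha_2}(\pi F)\lesssim A\cdot N_{\alpha_3/\alpha_2}(\pi F)\lesssim_\eta A\cdot B^{\overline{p_2}(\ii)+\eta}$; since $A\geq 1$ and $1-\overline{p_1}(\ii)\in[0,1]$, one factors $A=A^{\overline{p_1}(\ii)}\cdot A^{1-\overline{p_1}(\ii)}\leq A^{\overline{p_1}(\ii)}\cdot B^\eta$. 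In both cases the excess factor is at most $A^\eta B^\eta\leq\alpha_3(\ii)^{-2\eta}$ (using $A,B\leq\alpha_3(\ii)^{-1}$), and choosing $\eta=\eps/2$ produces the stated bound.

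The main obstacle is the case split itself. The condition $r\leq R^{1+\eta}$ needed for quasi-Assouad does not automatically hold at the geometrically forced scales $r=\alpha_3/\alpha_1$, $R=\alpha_3/\alpha_2$; the crux is the observation that precisely when it fails, the ratio $A$ is sub-polynomial in $B$, so the single-scale $(\alpha_3/\alpha_2)$-cover (losing only a factor $A$) is already sharp enough to be absorbed into the $\alpha_3(\ii)^{-\eps}$ slack. A secondary, bookkeeping check is that the projection $\pi$ arising from the factorisation of $\pi_\ii^2\circ S_\ii$ really has the dimension $\overline{p_2}(\ii)$ promised by the definition, and likewise that the $1$-dimensional projection used for quasi-Assouad coincides with $\pi_\ii^1 F$; both follow from the generalised-permutation structure of $A_\ii$.
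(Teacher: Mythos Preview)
Your proposal is correct and follows essentially the same approach as the paper's proof: reduce to the 2D projection, cover at scale $\alpha_3/\alpha_2$ using the upper box dimension $\overline{p_2}(\ii)$, then refine in the long direction using the quasi-Assouad dimension $\overline{p_1}(\ii)$, splitting into the two cases according to whether the quasi-Assouad gap condition $r\leq R^{1+\eta}$ holds. The paper's threshold $\delta/\alpha_1(\ii)\leq(\delta/\alpha_2(\ii))^{1+\eps}$ is, after your substitution $\delta=\alpha_3(\ii)$, exactly your condition $A\geq B^\eta$, and the handling of the degenerate case (trivial factor $A$, then $A^{1-\overline{p_1}(\ii)}\leq B^\eta\leq\alpha_3(\ii)^{-\eta}$) matches the paper line for line; the only differences are cosmetic (you work in the pulled-back frame with $(\alpha_3/\alpha_1)\times(\alpha_3/\alpha_2)$ rectangles, whereas the paper pushes forward and works with images of $(\delta/\alpha_2)$-squares under $S_\ii$).
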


\begin{proof}
Since $\alpha_3(\ii) \leq \delta$, we have
\[
N_\delta(S_\ii(F)) \lesssim N_\delta(\pi_\ii^2  S_\ii(F) ).
\]
In order to obtain a $\delta$-cover of  $\pi_\ii^2  S_\ii(F)$ first take a  cover of $S_\ii^{-1}\pi_\ii^2  S_\ii(F)$ by
\[
\lesssim_\eps \left(\frac{\alpha_2(\ii)}{\delta}\right)^{\overline{p_2}(\ii) +\eps}
\]
 squares of sidelength $\delta/ \alpha_2(\ii)$ and consider the image of this cover under $S_\ii$ which is a cover of $\pi_\ii^2  S_\ii(F)$ by rectangles with sidelengths $\delta \alpha_1(\ii)/ \alpha_2(\ii) $    and $\delta$.  We want to cover each of these rectangles efficiently by squares of sidelength $\delta$ and the key observation is that this is equivalent to covering some interval of length $\delta \alpha_1(\ii)/ \alpha_2(\ii)$ intersected with $\pi_\ii^1  S_\ii(F)$ with intervals of length $\delta$.  Moreover, this is equivalent to covering some interval of length $\delta  / \alpha_2(\ii)$ intersected with $S_\ii^{-1} \pi_\ii^1  S_\ii(F)$ with intervals of length $\delta/\alpha_1(\ii)$.  The set $F' = S_\ii^{-1} \pi_\ii^1  S_\ii(F)$ is simply the projection of $F$ onto  one of the principal 1-dimensional subspaces.  Moreover, $h_{F'}(\eps) \leq \qad F' = \overline{p_1}(\ii) $.  Therefore, if 
\begin{equation} \label{kappaa}
\frac{\delta}{\alpha_1(\ii)}  \leq \left( \frac{\delta}{ \alpha_2(\ii)} \right)^{1+\eps}
\end{equation}
then we may cover each rectangle above by
\begin{equation} \label{good}
\lesssim_\eps  \left( \frac{ \alpha_1(\ii)}{ \alpha_2(\ii)} \right)^{h_{F'}(\eps)+\eps} \leq   \left( \frac{ \alpha_1(\ii)}{ \alpha_2(\ii)} \right)^{ \overline{p_1}(\ii)   +\eps} 
\end{equation}
squares of  sidelength $\delta$. Chaining the above estimates we get
\begin{eqnarray*}
N_\delta(S_\ii(F))   &\lesssim_\eps& \left(\frac{\alpha_2(\ii)}{\delta}\right)^{\overline{p_2}(\ii) +\eps}\left( \frac{ \alpha_1(\ii)}{ \alpha_2(\ii)} \right)^{ \overline{p_1}(\ii)   +\eps} \\ \\
& \lesssim&  \left( \frac{ \alpha_1(\ii)}{\alpha_2(\ii)} \right)^{\overline{p_1}(\ii)} \left( \frac{ \alpha_2(\ii)}{ \alpha_3(\ii)}\right)^{\overline{p_2}(\ii) } \alpha_3(\ii)^{-\eps}
\end{eqnarray*}
as required.  Now suppose that \eqref{kappaa} is not satisfied, in which case we must replace \eqref{good} with  the trivial estimate 
\[
\lesssim_\eps  \left( \frac{ \alpha_1(\ii)}{ \alpha_2(\ii)} \right)
\]
which gives
\begin{eqnarray}
N_\delta(S_\ii(F))   &\lesssim_\eps& \left(\frac{\alpha_2(\ii)}{\delta}\right)^{\overline{p_2}(\ii) +\eps}\left( \frac{ \alpha_1(\ii)}{ \alpha_2(\ii)} \right)  \nonumber \\  \nonumber\\
&\lesssim&  \left(\frac{\alpha_2(\ii)}{\alpha_3(\ii)}\right)^{\overline{p_2}(\ii)  }\left( \frac{ \alpha_1(\ii)}{ \alpha_2(\ii)} \right)^{\overline{p_1}(\ii)  }   \alpha_3(\ii)^{-\eps}\left( \frac{ \alpha_1(\ii)}{ \alpha_2(\ii)} \right)^{1-\overline{p_1}(\ii)  }. \label{upper1}
\end{eqnarray}
However, since \eqref{kappaa} fails,  
\[
\frac{\alpha_3(\ii)}{\alpha_1(\ii)}  \gtrsim \left( \frac{\alpha_3(\ii)}{ \alpha_2(\ii)} \right)^{1+\eps}
\]
and, in particular, 
\[
\frac{ \alpha_1(\ii)}{ \alpha_2(\ii)} \lesssim  \left(\frac{ \alpha_2(\ii)}{ \alpha_3(\ii)}\right)^\eps   \leq   \alpha_3(\ii)^{-\eps}.
\]
Combining this with \eqref{upper1} we get
\begin{eqnarray*}
N_\delta(S_\ii(F))   &\lesssim & \left(\frac{\alpha_2(\ii)}{\alpha_3(\ii)}\right)^{\overline{p_2}(\ii)}\left( \frac{ \alpha_1(\ii)}{ \alpha_2(\ii)} \right)^{\overline{p_1}(\ii)} \alpha_3(\ii)^{-2\eps}
\end{eqnarray*}
This is sufficient to prove the lemma, replacing $2\eps$ with $\eps$.
\end{proof}

\subsection{Proof of upper bound}

Let $\varepsilon >0$, $\delta \in (0,1]$ and $s>\overline{s_0}$. 
\begin{eqnarray*}
 \delta^{s+\varepsilon} N_{\delta} (F)
&\leq& \delta^{s+\varepsilon} \sum_{\textbf{\emph{i}} \in \mathcal{I}(\delta)} N_{\delta} \big(S_{\ii}(F)\big) \\
&\lesssim_\eps & \delta^{s+\epsilon} \sum_{\textbf{\emph{i}} \in \mathcal{I}(\delta)} \left( \frac{ \alpha_1(\ii)}{\alpha_2(\ii)} \right)^{\overline{p_1}(\ii)} \left( \frac{ \alpha_2(\ii)}{ \alpha_3(\ii)}\right)^{\overline{p_2}(\ii) }\alpha_3(\ii)^{-\eps} \qquad \text{(by Lemma \ref{keycover1})} \\
&\lesssim &  \sum_{\textbf{\emph{i}} \in \mathcal{I}(\delta)} \left( \frac{ \alpha_1(\ii)}{\alpha_2(\ii)} \right)^{\overline{p_1}(\ii)} \left( \frac{ \alpha_2(\ii)}{ \alpha_3(\ii)}\right)^{\overline{p_2}(\ii) }\alpha_3(\ii)^{-\eps} \alpha_3(\ii)^{s+\eps}\\
&=& \sum_{\textbf{\emph{i}} \in \mathcal{I}(\delta)} \overline{\psi}^s(\ii) \\
&\leq& \sum_{k=1}^{\infty} \overline{\Psi}_k^s.
\end{eqnarray*}
Since $s>\overline{s_0}$, $\overline{P}(s)= \lim_{k \to \infty} (\Psi_k^s)^{\frac{1}{k}}<1$, therefore $\sum_{k=1}^{\infty} \overline{\Psi}_k^s< \infty$. It follows that $\overline{\dim}_\text{B} F \leq s+\varepsilon$. Since $\varepsilon>0$ and $s>\overline{s_0}$ were arbitrary, we have the desired upper bound. \qed

\section{Proof of lower bound in Theorem \ref{main1}} \label{lbsection}
 
In this section we prove the lower bound from Theorem \ref{main1}.  For this we assume the COSC is satisfied and that the sponge is either $S_3$, $S_2$-partially ordered, or ordered.  The following lemma gives us control on the size of $\sum_{\ii \in \mathcal{I}(\delta)} \underline{\psi}^s(\ii)$ whenever $s<\underline{s_0}$ and holds without these additional assumptions. 

\begin{lma} \label{lb lma}
Let $F$ be any sponge and fix $s< \underline{s_0}$. For all $\delta>0$,
$$ \sum_{\ii \in \mathcal{I}(\delta)} \underline{\psi}^s(\ii) \gtrsim_s 1.$$
\end{lma}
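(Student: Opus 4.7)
The plan is to imitate the subsystem strategy of Lemma \ref{carries lemma} and Proposition \ref{pressure}: pass to a multiplicative subsystem, prove the bound there by a stopping-time argument, then transfer back to $\mathcal{I}(\delta)$.

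Since $s<\underline{s_0}$, Proposition \ref{pressure} gives $\underline{P}(s)>1$, so $\underline{\Psi}_N^s\to\infty$ as $N\to\infty$. Choose $N$ large enough that the multiplicative subsystem $\underline{\Gamma}\subset\mathcal{I}^{N+m}$ produced by Lemma \ref{carries lemma} satisfies $P':=\sum_{\gamma\in\underline{\Gamma}}\underline{\psi}^s(\gamma)\geq 2$. Define the subsystem stopping set
\[
\tilde{\mathcal{I}}(\delta)=\{\gamma_1\cdots\gamma_k\in\underline{\Gamma}^*:\alpha_3(\gamma_1\cdots\gamma_k)\leq\delta<\alpha_3(\gamma_1\cdots\gamma_{k-1})\}.
\]
For $k$ sufficiently large every $\ii\in\underline{\Gamma}^k$ has a unique prefix in $\tilde{\mathcal{I}}(\delta)$, so by multiplicativity on $\underline{\Gamma}$,
\[
(P')^k=\sum_{\ii\in\underline{\Gamma}^k}\underline{\psi}^s(\ii)=\sum_{\jj\in\tilde{\mathcal{I}}(\delta)}\underline{\psi}^s(\jj)(P')^{k-k_{\jj}},
\]
where $k_{\jj}$ denotes the subsystem length of $\jj$. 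Dividing through by $(P')^k$ and using $(P')^{-k_{\jj}}\leq 1$ yields $\sum_{\jj\in\tilde{\mathcal{I}}(\delta)}\underline{\psi}^s(\jj)\geq 1$.

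The hard part will be transferring this lower bound from $\tilde{\mathcal{I}}(\delta)$ to $\mathcal{I}(\delta)$. Each $\jj\in\tilde{\mathcal{I}}(\delta)$ has a unique prefix $\pi(\jj)\in\mathcal{I}(\delta)$ satisfying $|\jj|-|\pi(\jj)|\leq N+m-1$, and the preimage $\pi^{-1}(\ii')$ has at most $|\mathcal{I}|^{N+m-1}$ elements. The crucial step is a converse to \eqref{small drop}: there exists a constant $C=C(s,N+m)$ such that $\underline{\psi}^s(\ii\ell)\leq C\,\underline{\psi}^s(\ii)$ whenever $|\ell|\leq N+m-1$. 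The main ingredients are the estimates $\alpha_1(\ii\ell)\leq\alpha_1(\ii)$, $\alpha_3(\ii\ell)\geq\alpha_{min}^{|\ell|}\alpha_3(\ii)$, and, from the determinant multiplicativity of $\alpha_1\alpha_2\alpha_3$ together with $\alpha_1(\ii\ell)\geq\alpha_1(\ii)\alpha_3(\ell)$, the bound $\alpha_2(\ii\ell)\leq\alpha_{min}^{-|\ell|}\alpha_2(\ii)$. Any discrepancy between $\underline{p_1}(\ii\ell),\underline{p_2}(\ii\ell)$ and $\underline{p_1}(\ii),\underline{p_2}(\ii)$ contributes a correction factor; whenever these orderings actually change, the same geometric observation used in the proof of \eqref{small drop} shows that the relevant ratio $\alpha_1(\ii)/\alpha_2(\ii)$ or $\alpha_2(\ii)/\alpha_3(\ii)$ is itself bounded by $\alpha_{min}^{-|\ell|}$, which absorbs the correction. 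Combining everything,
\[
1\leq\sum_{\jj\in\tilde{\mathcal{I}}(\delta)}\underline{\psi}^s(\jj)\leq C\sum_{\jj\in\tilde{\mathcal{I}}(\delta)}\underline{\psi}^s(\pi(\jj))\leq C\,|\mathcal{I}|^{N+m-1}\sum_{\ii'\in\mathcal{I}(\delta)}\underline{\psi}^s(\ii'),
\]
yielding the claimed $\gtrsim_s 1$ lower bound.
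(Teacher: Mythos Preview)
Your proof is correct and follows the same subsystem strategy as the paper, but the bookkeeping runs in the opposite direction. The paper starts from $\mathcal{I}(\delta)$, restricts to words whose truncation to block length lies in $\underline{\Gamma}^*$, applies the \emph{forward} estimate \eqref{small drop} to pass to those truncations, and then compares the resulting section of the $\underline{\Gamma}$-tree to $\underline{\Gamma}^k$ (using that the section covers and that $\sum_{\underline{\Gamma}}\underline\psi^s\geq 1$). You instead build the $\underline{\Gamma}$-stopping set $\tilde{\mathcal I}(\delta)$ first, obtain $\sum_{\tilde{\mathcal I}(\delta)}\underline\psi^s\geq 1$ via the clean stopping-time identity, and then truncate \emph{back} to $\mathcal I(\delta)$ using a \emph{reverse} version of \eqref{small drop}, namely $\underline\psi^s(\ii\ell)\leq C\,\underline\psi^s(\ii)$ for bounded $|\ell|$.

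Both routes are valid. The paper's ordering has the advantage that it only needs \eqref{small drop} itself (already proved), whereas your route requires the reverse inequality as an extra ingredient. Your sketch of that reverse inequality is essentially right: since appending $\ell$ multiplies each of the three contraction ratios by a factor in $[\alpha_{min}^{|\ell|},1]$, one gets $\alpha_1(\ii\ell)\leq\alpha_1(\ii)$, $\alpha_3(\ii\ell)\geq\alpha_{min}^{|\ell|}\alpha_3(\ii)$, and (by monotonicity of the median) $\alpha_{min}^{|\ell|}\alpha_2(\ii)\leq\alpha_2(\ii\ell)\leq\alpha_2(\ii)$; the change-of-ordering correction is then handled exactly as in the paper's proof of \eqref{small drop}. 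Your stopping-time identity is a slightly slicker way to get $\sum_{\tilde{\mathcal I}(\delta)}\underline\psi^s\geq 1$ than the paper's comparison with $\underline\Gamma^k$, but the two arguments are equivalent once one notes that $\underline\psi^s$ is genuinely multiplicative on all of $\underline\Gamma^*$ (because the $A_\ii$, $\ii\in\underline\Gamma$, are diagonal with a common ordering).
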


\begin{proof}
Fix $s<\underline{s_0}$. Let $C$ be the constant from Lemma \ref{carries lemma}. Since $\underline{P}(s)>1$ there exists $N$ sufficiently large such that
\begin{equation} C \sum_{\ii \in \mathcal{I}^N} \underline{\psi}^s(\ii) \geq 1 . \label{carrieseq} \end{equation}
We fix this value of $N$. By Lemma \ref{carries lemma} there exists $\underline{\Gamma} \subset \mathcal{I}^{N+m}$ with $0 \leq m \leq 3$ such that $\underline{\psi}^s(\ii\jj)=\underline{\psi}^s(\ii)\underline{\psi}^s(\jj)$ for all $\ii, \jj \in \underline{\Gamma}$ and 
$$\sum_{\ii \in \underline{\Gamma}}\underline{\psi}^s(\ii) \geq C \sum_{\ii \in \mathcal{I}^N} \underline{\psi}^s(\ii).$$
By (\ref{carrieseq}),
\begin{equation}
\sum_{\ii \in \underline{\Gamma}} \underline{\psi}^s(\ii) \geq 1.
\label{keybound}
\end{equation}
Now fix $\delta>0$ sufficiently small that $\delta<\alpha_{max}^N$. In particular this means that for all $\ii \in \mathcal{I}(\delta)$, $|\ii| >N$. Write $k(\ii)= |\ii|-(|\ii| \mod N)$. Note that by (\ref{small drop}), $\underline{\psi}^s(\ii) \geq c \underline{\psi^s}(\ii|_{k(\ii)})$ where $c$ is a constant which depends only on $s$ and $N$, in particular it does not depend on $\delta$. Let $\mathcal{I}_{\underline{\Gamma}}(\delta) \subset \mathcal{I}(\delta)$ be the collection of $\ii \in \mathcal{I}(\delta)$ for which $\ii|_{k(\ii)} \in \underline{\Gamma}^l$ for some $l \in \N$. Let 
$$\mathcal{I}^-(\delta)= \{ \ii|_{k(\ii)}: \ii \in \mathcal{I}_{\underline{\Gamma}}(\delta)\}.$$
It follows that
$$\sum_{\ii \in \mathcal{I}(\delta)} \underline{\psi}^s(\ii) \geq \sum_{\ii \in \mathcal{I}_{\underline{\Gamma}}(\delta)} \underline{\psi}^s(\ii) \geq c \sum_{\ii \in \mathcal{I}^-(\delta)} \underline{\psi}^s(\ii).$$
Fix $k= \min\{k(\ii): \ii \in \mathcal{I}^-(\delta)\}$. By (\ref{keybound}) and the fact that $\underline{\psi}^s$ is multiplicative on $\underline{\Gamma}$, 
$$\sum_{\ii \in \mathcal{I}^-(\delta)} \underline{\psi}^s(\ii) \geq \sum_{\ii \in \underline{\Gamma}^k} \underline{\psi}^s(\ii)= \left(\sum_{\ii \in \underline{\Gamma}} \underline{\psi}^s(\ii) \right)^k \geq 1.$$
In particular, $\sum_{\ii \in \mathcal{I}(\delta)} \underline{\psi}^s(\ii) \geq c$, completing the proof.
\end{proof}

\subsection{Covering lemma}

In the following lemma we obtain a lower bound on the covering number $N_{\delta}(S_{\ii}F)$ for $S_3$-sponges, $S_2$-partially ordered sponges and ordered sponges. 

\begin{lma} \label{keycover2}
Suppose $F$ is an $S_3$-sponge, an $S_2$-partially ordered sponge or an ordered sponge and fix $\eps \in (0,1)$.  For $\delta \in (0,1)$ and $\ii \in \mathcal{I}(\delta)$, we have
\[
N_\delta(S_\ii(F)) \gtrsim_\eps \left( \frac{ \alpha_1(\ii)}{\alpha_2(\ii)} \right)^{\underline{p_1}} \left( \frac{ \alpha_2(\ii)}{ \alpha_3(\ii)}\right)^{\underline{p_2}}\alpha_3(\ii)^{\eps}.
\]
\end{lma}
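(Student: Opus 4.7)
The plan is to mirror the two-step covering strategy of Lemma \ref{keycover1}, with lower box dimension estimates in place of the quasi-Assouad and upper box dimension bounds used there. First, since projection cannot increase covering numbers, $N_\delta(S_\ii F) \geq N_\delta(\pi_\ii^2 S_\ii F)$. Under the hypothesis that $F$ is an $S_3$-sponge, an $S_2$-partially ordered sponge, or an ordered sponge, the quantities $\underline{p_1}$ and $\underline{p_2}$ are independent of $\ii$, and $\pi_\ii^2 S_\ii F$ coincides up to an isometry with an affine image of the principal 2-dim projection $\pi^2 F$, stretched by factors $\alpha_1(\ii)$ and $\alpha_2(\ii)$. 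Setting $r=\delta/\alpha_1(\ii) \leq R=\delta/\alpha_2(\ii)$, and using $\alpha_3(\ii) \approx \delta$, the lemma reduces to showing
\[
N_{r \times R}(\pi^2 F) \gtrsim_\eps (R/r)^{\underline{p_1}} R^{-\underline{p_2}} \delta^\eps,
\]
where $N_{r\times R}$ denotes the minimum number of axis-aligned rectangles with sides $r$ and $R$ required to cover $\pi^2 F \subset [0,1]^2$.

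I would establish this via a two-stage estimate. The first stage uses the definition of lower box dimension to produce $\gtrsim_\eps R^{-\underline{p_2}+\eps/2}$ disjoint $R$-squares that each meet $\pi^2 F$. The second stage aims to show that within each such $R$-square at least $\gtrsim (R/r)^{\underline{p_1} - \eps/2}$ rectangles of size $r \times R$ are required; multiplying the two counts and absorbing the error into $\delta^\eps$ then yields the desired bound.

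The hard part will be the local lower bound in the second stage, because restricted lower box dimension bounds for $\pi^1 F$ inside $R$-intervals are not directly available from the definition (this is precisely why the upper bound in Lemma \ref{keycover1} had to invoke the quasi-Assouad dimension). To get around this, I would pass back to the 3D sub-cylinder structure and exploit the COSC. Specifically, the plan is to work with sub-cylinders $S_{\ii\jj}([0,1]^3)$ for $\jj$ ranging over a carefully chosen cut set of the IFS, arranged so that each sub-cylinder fits inside a single $\delta$-box. The COSC then guarantees that these sub-cylinders are pairwise disjoint in $\mathbb{R}^3$, while the ordered (or $S_3$/$S_2$-symmetric) structure ensures that they are suitably spread out in the $(x,y)$-plane. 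Counting them via the multiplicative subsystem $\underline{\Gamma}$ from Lemma \ref{carries lemma}, together with Lemma \ref{lb lma} to guarantee that the total weight is bounded below, should yield the required local lower bound on the $r$-covering number of $\pi^2 F$ inside each $R$-square, completing the argument.
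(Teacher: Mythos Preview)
Your overall two-stage strategy (project to the principal 2-dimensional subspace, pack by $R$-squares, then refine inside each square) matches the paper, and your identification of the second stage as the crux is correct. However, the proposal has a genuine gap at exactly that point.

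The difficulty is an \emph{alignment} problem that your write-up does not name. Inside a given $R$-square $Q$ you want $\gtrsim (R/r)^{\underline{p_1}-\eps}$ many $r\times R$ rectangles. To get this, you need a sub-cylinder $S_{\ii(Q)}([0,1]^3)$ whose projection sits in $Q$ with longest side $\approx R$, \emph{and} whose longest side is parallel to the long side of the rectangle $S_\ii(Q)$ (i.e.\ parallel to the $\alpha_1(\ii)$-direction). Only then does the projection $\pi_\ii^1$ restricted to $S_{\ii(Q)}$ give a copy of $\pi^1 F$ at scale $\approx R$ in the correct direction, allowing the lower box dimension of $\pi^1 F$ to be invoked. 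If instead the sub-cylinder in $Q$ has its long side \emph{orthogonal} to the $\alpha_1(\ii)$-direction, this argument fails completely. Your sentence ``the ordered (or $S_3$/$S_2$-symmetric) structure ensures that they are suitably spread out in the $(x,y)$-plane'' is precisely the content that needs proof, and Section~\ref{dimdrop} of the paper shows that for general (unordered, non-$S_3$) sponges this alignment can fail for \emph{most} $Q$, causing the lemma to be false.

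The paper's resolution is different from what you propose. For ordered sponges, the co-ordinate ordering condition forces every sub-cylinder to be aligned the right way, so the bad situation never occurs. For $S_3$-sponges (and similarly $S_2$-partially ordered), the paper argues: if fewer than half the squares $Q$ are ``good'', then at least half are ``bad''; now choose $\jj$ of bounded length such that $A_\jj$ fixes the axis orthogonal to $\pi_\ii^2\mathbb{R}^3$ and swaps the other two axes (such $\jj$ exists precisely because $G\cong S_3$ or $S_2$). Replacing $S_{\ii(Q)}$ by $S_\jj S_{\ii(Q)}$ converts each bad square into a good one at comparable scale. This rotation trick is the missing idea.

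Two further remarks on your proposed route. First, the COSC is not used in the proof of this lemma; it enters only later, in the main lower bound argument, to control overlaps between \emph{different} cylinders $S_\ii(F)$ for $\ii\in\mathcal{I}(\delta)$. Second, Lemmas~\ref{carries lemma} and~\ref{lb lma} concern global sums $\sum_{\ii}\underline{\psi}^s(\ii)$ and do not give local packing estimates inside a single $R$-square; invoking them here would not produce the required bound.
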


\begin{proof}
Since $\alpha_3(\ii) \leq \delta$, we have
\[
N_\delta(S_\ii(F)) \gtrsim N_\delta(\pi_\ii^2  S_\ii(F) ).
\]
Since we are trying to bound this quantity from below it is more convenient to argue in terms of packings rather than covers.  We say a collection of squares oriented with the coordinate axes is an  $r$-packing of $E$ if the squares are centred in $E$, have sidelength $r$ and are pairwise disjoint.    In particular, if $X$ is such a packing, then $N_r(E) \approx \# X$.

 In order to obtain a $\delta$-packing of  $\pi_\ii^2  S_\ii(F)$ first take a  packing of $S_\ii^{-1}\pi_\ii^2  S_\ii(F)$ by
\[
\gtrsim_\eps \left(\frac{\alpha_2(\ii)}{\delta}\right)^{\underline{p_2}-\eps}
\]
 squares $\mathcal{Q}$ of sidelength $\delta/ \alpha_2(\ii)$ and consider the image of this packing under $S_\ii$ which is a packing of $\pi_\ii^2  S_\ii(F)$ by rectangles with sidelengths $\delta \alpha_1(\ii)/ \alpha_2(\ii) $    and $\delta$.   Given $Q \in \mathcal{Q}$ fix $\ii(Q) \in \mathcal{I}^*$ such that the rectangle $Q' :=S_\ii^{-1}\pi_\ii^2 S_\ii S_{\ii(Q)}([0,1]^3)$ intersects $Q$ and has longest side $\approx   \delta/ \alpha_2(\ii)$.
 
 \vspace{2mm}
 
\noindent \emph{Case 1 (Ordered sponge):} By the co-ordinate ordering condition, the longest sides of $S_{\ii}(Q)$ and $S_{\ii}(Q')$ are parallel. Therefore, we can find a $\delta$-packing inside $S_\ii(2Q)$ by
 \[
 \gtrsim_\eps \left(\frac{\delta \alpha_1(\ii)/ \alpha_2(\ii)}{\delta}\right)^{\underline{p_1} -\eps} = \left(\frac{ \alpha_1(\ii)}{\alpha_2(\ii)}\right)^{\underline{p_1} -\eps}
 \]
 many squares where $2Q$ is the square centred at the same point as $Q$ but with double the sidelength. Hence we can find a $\delta$-packing of $S_\ii(F)$ by
\[
\gtrsim_\eps \left(\frac{\alpha_2(\ii)}{\delta}\right)^{\underline{p_2}-\eps} \left(\frac{ \alpha_1(\ii)}{\alpha_2(\ii)}\right)^{\underline{p_1} -\eps} \gtrsim_\eps \left( \frac{ \alpha_1(\ii)}{\alpha_2(\ii)} \right)^{\underline{p_1}} \left( \frac{ \alpha_2(\ii)}{ \alpha_3(\ii)}\right)^{\underline{p_2}}\alpha_3(\ii)^{\eps}
\]
cubes and the proof is complete.

\vspace{2mm} 

\noindent \emph{Case 2 ($S_3$-sponge):}  There is now a ``good situation'' and a ``bad situation''.  In the good situation, the rectangles $S_\ii(Q')$ and $S_\ii(Q)$ are ``aligned'' in the same way, meaning that their longest sides are parallel, and in the bad situation their longest sides are orthogonal.  In the good situation, as before, we can find a $\delta$-packing inside $S_\ii(2Q)$ by
 \[
 \gtrsim_\eps \left(\frac{\delta \alpha_1(\ii)/ \alpha_2(\ii)}{\delta}\right)^{\underline{p_1} -\eps} = \left(\frac{ \alpha_1(\ii)}{\alpha_2(\ii)}\right)^{\underline{p_1} -\eps}
 \]
 many squares where $2Q$ is again the square centred at the same point as $Q$ but with double the sidelength. Therefore, if at least 1/2 of the squares in $\mathcal{Q}$ are in the good situation, then we can find a $\delta$-packing of $S_\ii(F)$ by
\[
\gtrsim_\eps \left(\frac{\alpha_2(\ii)}{\delta}\right)^{\underline{p_2}-\eps} \left(\frac{ \alpha_1(\ii)}{\alpha_2(\ii)}\right)^{\underline{p_1} -\eps} \gtrsim_\eps \left( \frac{ \alpha_1(\ii)}{\alpha_2(\ii)} \right)^{\underline{p_1}} \left( \frac{ \alpha_2(\ii)}{ \alpha_3(\ii)}\right)^{\underline{p_2}}\alpha_3(\ii)^{\eps}
\]
cubes and the proof would be complete.  Suppose that it is \emph{not} true that  at least 1/2 of the squares in $\mathcal{Q}$ are in the good situation, in which case at least half of the cubes are in the bad situation.  Since 
$G \cong S_3$, we can find $\jj \in \mathcal{I}^n$ for some $n \lesssim 1$ such that $A_\jj$ preserves the axis orthogonal to $\pi_\ii^2 \mathbb{R}^3$ and swaps the other two axes.  Consider the maps $S_\jj S_{\ii(Q)}$ for all $\ii(Q)$ corresponding to $Q$ in the bad situation.  By placing a centred square with sidelength $\approx  \delta/ \alpha_2(\ii)$ inside each of the sets $S_\ii^{-1}\pi_\ii^2 S_\ii S_\jj S_{\ii(Q)}([0,1]^3)$ we can find a packing of $S_\ii^{-1}\pi_\ii^2  S_\ii(F)$ by
\[
\gtrsim_\eps \left(\frac{\alpha_2(\ii)}{\delta}\right)^{\underline{p_2}-\eps}
\]
 squares $\mathcal{Q}$ of sidelength $\approx \delta/ \alpha_2(\ii)$ which are all now in the good situation by virtue of the map $S_\jj$ introducing a rotation.  This is enough to complete the proof.
\vspace{2mm} 

\noindent \emph{Case 3 ($S_2$-partially ordered sponge):} This is similar to the $S_3$ case, noting that the partial order guarantees that when we wish to  find $\jj \in \mathcal{I}^n$ for some $n \lesssim 1$ such that $A_\jj$ preserves the axis orthogonal to $\pi_\ii^2 \mathbb{R}^3$ and swaps the other two axes, we only need to consider $A_\jj$ which swap the first two coordinate axes and fixes the third.  The existence of such a $\jj$  is guaranteed by the $S_2$ assumption.
\end{proof}

\subsection{Proof of lower bound}

We are now ready to prove the lower bound in Theorem \ref{main1}. Let $0<s<\underline{s_0}$, $\varepsilon \in (0, s)$. Let $U$ be any closed cube of sidelength $\delta$.   Also, let $R$ be the open cuboid used in the COSC and let $r_-$ denote the length of the shortest side of $R$.  Finally, let
\[
M = \min\big\{n \in \mathbb{N} : n \geq (\alpha_{\min}r_-)^{-1}+2\big\}.
\]
Since $\{S_\textbf{\emph{i}}(R)\}_{\textbf{\emph{i}}\in \mathcal{I}(\delta)}$ is a collection of pairwise disjoint open rectangles each with shortest side having length at least $\alpha_{\min} \delta r_-$, it is clear that $U$ can intersect no more than $M^3$ of the sets $\{S_{\ii} F\}_{\textbf{\emph{i}}\in \mathcal{I}(\delta)}$.  It follows that, using the $\delta$-mesh definition of $N_\delta$, we have
\[
\sum_{\textbf{\emph{i}}\in \mathcal{I}(\delta)} N_\delta \big(F_{\textbf{\emph{i}}}\big)  \leq M^3 \, N_\delta (F) \approx N_\delta (F).
\]
This yields
\begin{eqnarray*}
\delta^{s-\varepsilon} N_\delta (F)  &\gtrsim&   \delta^{s-\varepsilon}\sum_{\textbf{\emph{i}}\in \mathcal{I}(\delta)} N_\delta \big(S_{\ii}(F)\big)\\ \\
&\gtrsim_{\epsilon} & \delta^{s-\varepsilon} \sum_{\textbf{\emph{i}}\in \mathcal{I}(\delta)} \left( \frac{ \alpha_1(\ii)}{\alpha_2(\ii)} \right)^{\underline{p_1}} \left( \frac{ \alpha_2(\ii)}{ \alpha_3(\ii)}\right)^{\underline{p_2} }\alpha_3(\ii)^{\eps} \qquad \text{(by Lemma \ref{keycover2})} \\
&\geq & \sum_{\textbf{\emph{i}}\in \mathcal{I}(\delta)} \left( \frac{ \alpha_1(\ii)}{\alpha_2(\ii)} \right)^{\underline{p_1}} \left( \frac{ \alpha_2(\ii)}{ \alpha_3(\ii)}\right)^{\underline{p_2} }\alpha_3(\ii)^{\eps}\alpha_3(\ii)^{s-\eps} \\
&=&  \sum_{\textbf{\emph{i}}\in \mathcal{I}(\delta)} \underline{\psi}^s(\ii)\gtrsim_s 1
\end{eqnarray*}
where the final line follows by Lemma \ref{lb lma}. Since $\eps \in (0,s)$ and $s< \underline{s_0}$ were arbitrary, $\underline{\dim}_\text{B} F \geq \underline{s_0}$.

\section{Example of dimension drop} \label{dimdrop}

In this section we prove Theorem \ref{counter}; in particular we construct a self-affine sponge which is generated by diagonal matrices and which satisfies the COSC, but whose upper box dimension is \emph{strictly smaller} than the root of the corresponding pressure function. 

Recall that in order to obtain a lower bound on the covering number $N_{\delta}(S_{\ii}F)$ in Lemma \ref{keycover2} one needed to guarantee that a positive proportion of packing boxes in the projection $\pi^2_{\ii} F$ intersected a cylinder with longest side comparable to the box side length and whose ordering agreed with the ordering of $\pi^2_{\ii}S_{\ii}F$.

For ordered sponges this followed directly from the co-ordinate ordering condition, which guarantees that \emph{any} cylinder in $\pi^2_{\ii}F$ agrees with the ordering of $\pi^2_{\ii}S_{\ii}F$. On the other hand, for $S_3$-sponges it followed from the existence of rotations which fixed the axis orthogonal to $\pi^2_{\ii}S_{\ii}F$ and swapped the other two axes so that at least half of packing boxes in the projection $\pi^2_{\ii} F$ intersected a cylinder with the desired properties.

However, for general sponges there is no way to guarantee that a positive proportion of packing boxes in the projection $\pi^2_{\ii} F$ intersect a cylinder with the desired properties and this can lead to a drop in the lower bound in Lemma \ref{keycover2}. In this section we construct a self-affine sponge $F$ such that `typical' cylinders in the projection $\pi^2_{\ii} F$ do not have an ordering that agrees with `typical' cylinders in $F$, which leads to a dimension drop.  

\subsection{Set-up} 

Let $0<\frac{1}{N}<c<b<a<d=1-b<1$ with $a+c<1$ and let $F$ be the attractor of the IFS $\{S_1, \ldots, S_{N+1}\}$ where:
\begin{equation*}
S_i\begin{pmatrix}x\\y\\ z\end{pmatrix}=
\left\{ \begin{array}{cc}

\begin{pmatrix}a&0&0\\0&b&0\\0&0&\frac{1}{N}\end{pmatrix}\begin{pmatrix}x\\y\\z\end{pmatrix}+\begin{pmatrix}0\\0\\\frac{i-1}{N}\end{pmatrix} & i=1, \ldots, N  \\
\begin{pmatrix}c&0&0\\0&d&0\\0&0&\frac{1}{N}\end{pmatrix}\begin{pmatrix}x\\y\\z\end{pmatrix}+\begin{pmatrix}1-c\\b\\0\end{pmatrix}& i=N+1,\end{array} \right.
\end{equation*}
see Figure \ref{jumpyfig}. We will show that for certain values of $a,b,c,d$ and $N$, $F$ is a self-affine sponge that satisfies Theorem \ref{counter}.

Let $A_{\ii}$ denote the linear part of the map $S_{\ii}$. Notice that for all $\ii \in \mathcal{I}^{\ast}$, $\pi_{\ii}^2$ corresponds to the projection to the Bara\'nski carpet $F'$ which is the attractor of the IFS $\{T_1, T_2\}$ with
\begin{eqnarray}
T_1\begin{pmatrix}x\\y\end{pmatrix}&=&\begin{pmatrix}a&0\\0&b\end{pmatrix}\begin{pmatrix}x\\y\end{pmatrix} \nonumber\\
T_2\begin{pmatrix}x\\y\end{pmatrix}&=&\begin{pmatrix}c&0\\0&d\end{pmatrix}\begin{pmatrix}x\\y\end{pmatrix}+\begin{pmatrix}1-c\\b\end{pmatrix} \label{baranski}.
\end{eqnarray}
 It follows from \cite[Theorem B]{baranski} that $\bd F'=1$ and therefore $\underline{p_2}(\ii)=\overline{p_2}(\ii)=1$ for all $\ii \in \mathcal{I}^{\ast}$. Also, $\underline{p_1}(\ii)=\overline{p_1}(\ii)$ and this will always either be equal to 1 or $t$, where $0<t<1$ satisfies $a^t+c^t=1$.

\begin{figure}[H]
  \begin{center}\includegraphics[width=\textwidth]{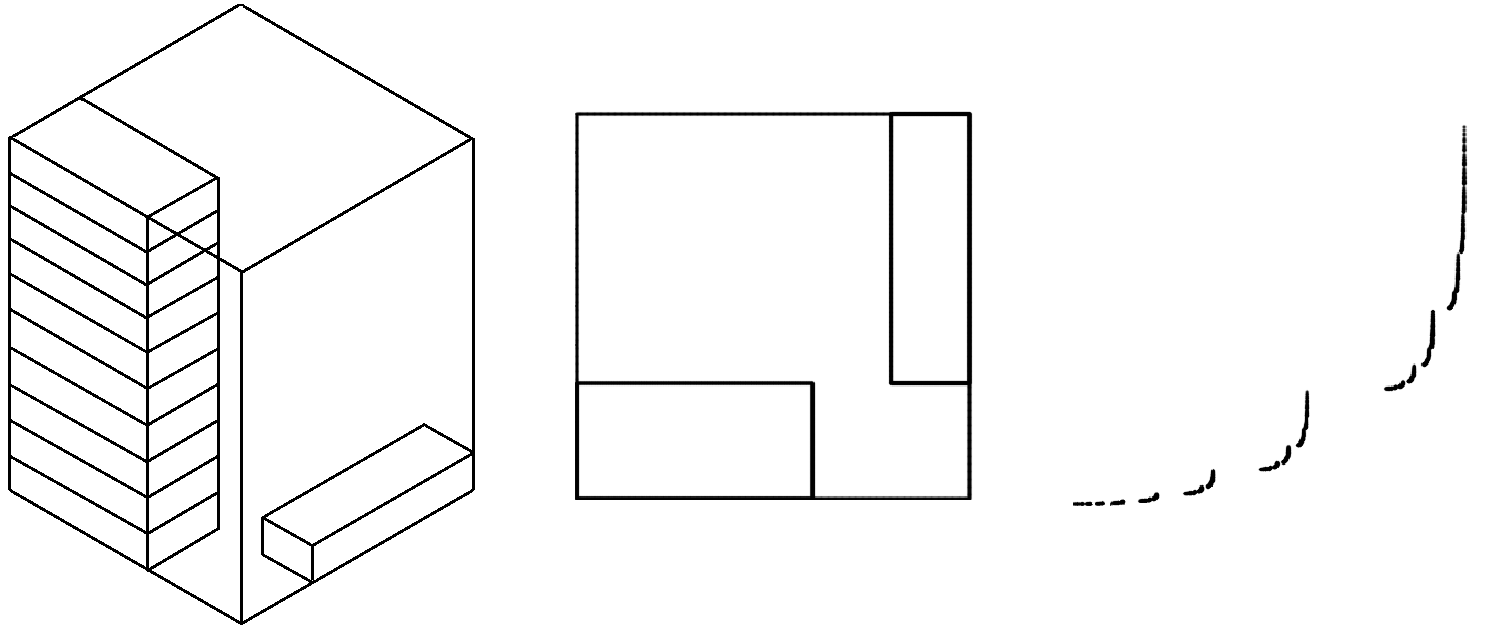}\end{center}
  \caption{Left: the images of the unit cube under the maps in the IFS. Centre: the image of the unit square under the maps generating the projection onto the first two co-ordinates. Right: the projection $F'$ of the attractor.  Here, $N=10$, $a=3/5$, $b=3/10$, $c=1/5$, and $d=7/10$.}
  \label{jumpyfig}
\end{figure}

Since $\underline{p_2}(\ii)=\overline{p_2}(\ii)=1$  and  $\underline{p_1}(\ii)=\overline{p_1}(\ii)$ for all $\ii \in \mathcal{I}^{\ast}$, we can write $\psi^s(\ii)= \alpha_1(\ii)^{p_1(\ii)}\alpha_2(\ii)^{1-p_1(\ii)}\alpha_3(\ii)^{s-1}$, where $p_1(\ii)=\underline{p_1}(\ii)=\overline{p_1}(\ii)$,  $\Psi_k^s= \sum_{\ii \in \mathcal{I}^k} \psi^s(\ii)$ and $P(s)= \lim_{k \to \infty} (\Psi_k^s)^{\frac{1}{k}}$. We can also write $s_0=\underline{s_0}=\overline{s_0}$ to be the unique value that satisfies $P(s_0)=1$. Moreover,
\begin{eqnarray*}
P(s) &=& \lim_{k \to \infty} \left( \sum_{\ii \in \mathcal{I}^k} \alpha_1(\ii)^{p_1(\ii)}\alpha_2(\ii)^{1-p_1(\ii)}\frac{1}{N^{k(s-1)}}\right)^{\frac{1}{k}} \\
&=&N^{1-s} \lim_{k \to \infty}  \left( \sum_{\ii \in \mathcal{I}^k} \alpha_1(\ii)^{p_1(\ii)}\alpha_2(\ii)^{1-p_1(\ii)}\right)^{\frac{1}{k}} \\
&=& N^{2-s}a^tb^{1-t}+c^td^{1-t}N^{1-s}
\end{eqnarray*}
for $N$ sufficiently large. To justify the final equality, observe that for all $\ii \in \mathcal{I}^k$
\[
\alpha_1(\ii)^{p_1(\ii)}\alpha_2(\ii)^{1-p_1(\ii)} = \max\{\beta_1(\ii)^{t}\beta_2(\ii)^{1-t} , \beta_2(\ii)\}
\]
where $\beta_1(\ii)$ is the length of the base of the rectangle $T_{\ii}([0,1]^2)$ and $\beta_2(\ii)$ is the height of the rectangle $T_{\ii}([0,1]^2)$. Therefore 
\begin{eqnarray*}
&\,& \hspace{-20mm}\max\left\{   \left(\sum_{\ii \in \mathcal{I}^k} \beta_1(\ii)^{t}\beta_2(\ii)^{1-t} \right)^{\frac{1}{k}}  , \  \left(\sum_{\ii \in \mathcal{I}^k} \beta_2(\ii)     \right)^{\frac{1}{k}}  \right\} \\ \\
 &\leq &    \left(\sum_{\ii \in \mathcal{I}^k} \alpha_1(\ii)^{p_1(\ii)}\alpha_2(\ii)^{1-p_1(\ii)}  \right)^{\frac{1}{k}} \ \leq \    \left(\sum_{\ii \in \mathcal{I}^k} \beta_1(\ii)^{t}\beta_2(\ii)^{1-t}   +  \sum_{\ii \in \mathcal{I}^k} \beta_2(\ii)  \right)^{\frac{1}{k}}  \\ \\
& \leq& 2^{\frac{1}{k}}  \max\left\{   \left(\sum_{\ii \in \mathcal{I}^k} \beta_1(\ii)^{t}\beta_2(\ii)^{1-t} \right)^{\frac{1}{k}}  , \  \left(\sum_{\ii \in \mathcal{I}^k} \beta_2(\ii)     \right)^{\frac{1}{k}}  \right\} .
\end{eqnarray*}
 Unlike the singular values,  $\beta_1(\ii)$  and $\beta_2(\ii)$ have the advantage of being multiplicative since the matrices defining $T_1, T_2$ are diagonal and therefore
\begin{eqnarray*}
\max\left\{   \left(\sum_{\ii \in \mathcal{I}^k} \beta_1(\ii)^{t}\beta_2(\ii)^{1-t} \right)^{\frac{1}{k}}  , \  \left(\sum_{\ii \in \mathcal{I}^k} \beta_2(\ii)     \right)^{\frac{1}{k}}  \right\} &=& \max\left\{ N a^tb^{1-t}+c^td^{1-t}  , \  N b+d  \right\} \\ \\
&=&N a^tb^{1-t}+c^td^{1-t}  
\end{eqnarray*}
for sufficiently large $N$, proving the claimed equality. 

It is easy to see that as $N \to \infty$, the root $s_0 \to 2$.  We also assume that the defining parameters satisfy the following conditions, noting that it is possible to satisfy them all simultaneously:
\begin{enumerate}[(i)]
\item $c$ is taken sufficiently small so that 
$$a^t\log a+c^t\log c< a^t \log b+c^t \log d,$$
\item $\eta'>0$ is taken sufficiently small such that
\begin{equation} \label{defq}
q:= \frac{\log(b/a)+\eta'}{\log(bc/ad)} >0,
\end{equation}
\item $N$ is taken sufficiently large such that $\frac{\log 2d}{\log N}< \frac{q}{2}$ and $s_0> 2-\frac{q}{2}$.
\end{enumerate}

The idea behind our construction is as follows. The parameters have been chosen in such a way so that in the projection $\pi^2_{\ii}F=F'$ the $y$ direction dominates, and cylinders deep in the construction of $F'$ typically have very thin width and long height. On the other hand, the typical behaviour for $F$ is affected by the sheer number of maps whose contraction in the $x$ and $y$ direction is determined by the matrix $A_1= \ldots = A_N$, and therefore typical projected cylinders in $\pi^2_{\ii} S_{\ii}F$ will be longer in the $x$ direction than the $y$ direction; in particular the orderings will not agree. Moreover, since the cylinders in $F'$ will be typically very tall and thin, typical covering boxes will not contain much of the projection $\pi_{\ii}^1F$, which will lead to a dimension drop.

\subsection{Typical covering boxes for the Bara\'nski carpet $F'$}

In this section we will solely consider the Bara\'nski system $\{T_1, T_2\}$ given by (\ref{baranski}). We will let $\mathcal{I}_2=\{1,2\}$ and $\mathcal{I}_2^{\ast}=\bigcup_{n \in \N} \mathcal{I}_2^n$ be the set of finite words as usual. For $\ii \in \mathcal{I}_2^*$, let $M_{\ii}$ denote the linear part of the map $T_{\ii}$. Throughout this section, for $\ii \in \mathcal{I}_2^*$, $\alpha_1(\ii)$ and $\alpha_2(\ii)$ will denote the first and second singular values of the matrix $M_{\ii}$. Let $\mu$ be the $(a^t,c^t)$ Bernoulli measure on $\Sigma_2=\{1,2\}^{\mathbb{N}}$. For $\ii \in \I_2^n$ let $[\ii]:=\{\jj \in \Sigma_2: \jj|_n =\ii\}$. For $i=1,2$, let $\Lambda_i(\mu)$ be the $i$th Lyapunov exponent of $\mu$ for the Bara\'nski system, which is defined via the sub-additive ergodic theorem as the negative constant such that for $\mu$ almost all $\ii \in \Sigma_2$,
$$\Lambda_i(\mu)= \lim_{n \to \infty} \frac{1}{n} \log \alpha_i(\ii|_n).$$

By \cite[Theorem 1.7]{fk} and assumption (i),
\begin{eqnarray*}
\Lambda_1(\mu)&=& a^t\log b+c^t \log d\\
\Lambda_2(\mu)&=&a^t\log a+c^t\log c.
\end{eqnarray*}
Define  $\lambda_1, \lambda_2 \in (0,1)$ by 
\begin{eqnarray*}
 \log \lambda_1=\log b^{a^t}d^{c^t}= \Lambda_1(\mu)\\
 \log \lambda_2=\log a^{a^t}c^{c^t}=\Lambda_2(\mu)
\end{eqnarray*} 
\vspace{4mm}
and let $f,g : \mathcal{I}_2^{\ast} \to (0,1)$ be defined by the identity
$$M_{\ii}= \begin{pmatrix} f(\ii) &0 \\0& g(\ii)\end{pmatrix}.$$

Given $\beta>1$ and a cover $\mathcal{C}_{\gamma}$ of $F'$ by squares of side $\gamma$, we say that a square $B \in \mathcal{C}_{\gamma}$ has the $\beta$-property if there exist at most 2 rectangles $B_1$ and $B_2$ of height at most $\gamma$ and width at most $\gamma^{\beta}$ such that $F' \cap (B \setminus B_1 \cup B_2)= \emptyset$. Given a cover $\mathcal{C}_{\gamma}$ of $F'$ by squares of side $\gamma$, we let $\mathcal{C}_{\gamma}^{\beta}$ denote the collection of squares in $\mathcal{C}_{\gamma}$ which have the $\beta$-property. The following lemma shows that for some $\beta>1$ most boxes in a cover of $F'$ have the $\beta$-property. 

\begin{lma} \label{baranski lma}
There exists $\kappa>0$ and $\beta>1$ such that for all $\gamma>0$ and any cover $\mathcal{C}_{\gamma}$ of $F'$ by disjoint squares of side $\gamma$,
$$\frac{\#\mathcal{C}_{\gamma}^{\beta}}{\#\mathcal{C}_{\gamma}} \geq 1-\gamma^{\kappa}.$$
\end{lma}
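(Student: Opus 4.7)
The plan is to combine a large deviations count of ``wide'' cylinders at the height scale $\gamma$ with a pigeonhole-type count of ``large horizontal jumps'' between vertically consecutive stopping cylinders; most squares avoid both sources of bad behaviour and automatically satisfy the $\beta$-property.

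I would first introduce the scale-$\gamma$ cut set
\[
\Gamma_\gamma = \{ \ii \in \mathcal{I}_2^\ast : \beta_2(\ii) \leq \gamma < \beta_2(\ii^-)\}.
\]
Because $b + d = 1$, the sets $\pi_y T_\ii([0,1]^2)$ for $\ii \in \Gamma_\gamma$ partition $[0,1]$ into intervals of lengths in $(b\gamma, \gamma]$, so $\#\Gamma_\gamma \asymp \gamma^{-1}$ and each square in $\mathcal{C}_\gamma$ meets at most $K := \lceil 1/b\rceil + 2$ cylinders of $\Gamma_\gamma$ in the vertical direction. Fix $\beta > 1$ to be chosen below, and call $\ii \in \Gamma_\gamma$ \emph{thin} if $\beta_1(\ii) \leq \gamma^\beta$ and \emph{thick} otherwise. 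Parameterising $\ii$ by the empirical frequency $p = n_1/|\ii|$ of the symbol $1$, Stirling gives approximately $\gamma^{-D(p)}$ cylinders in $\Gamma_\gamma$ at frequency $p$, where $D(p) := h(p)/(p|\log b|+(1-p)|\log d|)$ with $h$ the binary entropy; the maximum $\max_{[0,1]} D = D(b) = 1$ is attained at $p = b$ (consistent with $\dim \pi_y F' = 1$). Thickness corresponds to $\phi(p) < \beta$, where $\phi(p) := (p|\log a|+(1-p)|\log c|)/(p|\log b|+(1-p)|\log d|)$. A short calculation using assumption (i) together with the elementary inequality $d a^t \geq b c^t$ (which follows from $d > b$, $a > c$, $t \in (0,1)$) shows $\phi(b) > 1$; hence for any $\beta \in (1, \phi(b))$, the maximiser $p = b$ lies strictly in the thin regime and a large deviations estimate gives at most $\gamma^{-1+\kappa_1}$ thick cylinders for some $\kappa_1 > 0$.

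Second I would examine the $x$-positions of vertically consecutive cylinders. If $\ii, \jj \in \Gamma_\gamma$ are $y$-adjacent with $\ii$ below and their longest common prefix is $\mathbf{k}$, then necessarily $\ii = \mathbf{k} 1 2^{m_1}$ (top of $T_{\mathbf{k}1}$) and $\jj = \mathbf{k} 2 1^{m_2}$ (bottom of $T_{\mathbf{k}2}$) for some $m_1, m_2 \geq 0$, and a direct computation gives an $x$-gap of $\beta_1(\mathbf{k})(1-a-c)$, hence $\asymp \beta_1(\mathbf{k})$. Call this transition a \emph{large jump} if $\beta_1(\mathbf{k}) > \gamma^\beta$. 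The number of ancestors $\mathbf{k}$ with $\beta_1(\mathbf{k}) > \gamma^\beta$ is bounded above by covering the self-similar $x$-projection (of dimension $t$ satisfying $a^t + c^t = 1$) at scale $\gamma^\beta$, giving at most $J \lesssim \gamma^{-\beta t}$ large jumps, each occurring at a distinct $y$-coordinate.

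Finally I would call $B \in \mathcal{C}_\gamma$ \emph{bad} if it meets a thick cylinder or its vertical range contains at least two large jumps. Type-1 bad squares number $\lesssim K \gamma^{-1+\kappa_1}$, while for Type-2, since squares are disjoint and each large jump lies in only boundedly many of them, every Type-2 bad square accounts for at least two of the $J$ jumps (up to a constant), giving at most $\lesssim \gamma^{-\beta t}$ Type-2 bad squares. Choosing $\beta \in (1, \min\{1/t, \phi(b)\})$---valid since $t < 1$ from $a+c<1$---both bad counts divided by the total $\#\mathcal{C}_\gamma \asymp \gamma^{-1}$ are bounded by $\gamma^\kappa$ for $\kappa := \min(\kappa_1, 1-\beta t) > 0$. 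Every good square $B$ meets only thin cylinders, and the at most one large jump splits their $x$-positions into at most two clusters, each of total extent $\lesssim K \gamma^\beta$; absorbing the $K$ by decreasing $\beta$ very slightly gives two rectangles of height $\gamma$ and width $\gamma^\beta$ covering $F' \cap B$, establishing the $\beta$-property. The main obstacle I anticipate is making the LDP step quantitative uniformly in $\gamma$ and cleanly justifying the absorption of $K$ into the cluster widths, as well as deriving $\phi(b) > 1$ from assumption (i) with the small computation I sketched.
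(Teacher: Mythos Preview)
Your strategy is genuinely different from the paper's, and as written it contains a real counting gap.

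\textbf{The gap.} In your Type-1 estimate you assert that the number of squares meeting a thick cylinder is $\lesssim K\gamma^{-1+\kappa_1}$, i.e.\ $\lesssim K\cdot \#(\text{thick cylinders})$. This would follow if every thick cylinder met $O(1)$ squares, but that is false: a cylinder $\ii\in\Gamma_\gamma$ has height $\leq\gamma$ but width $\beta_1(\ii)$, and for thick $\ii$ this width can be anything in $(\gamma^\beta,1]$. A cylinder of width $w$ and height $\leq\gamma$ can meet $\asymp w/\gamma$ disjoint $\gamma$-squares, so a single very thick cylinder (say $\ii=1^n$ with $\beta_1(\ii)=a^n$ close to $1$) can already meet $\asymp\gamma^{-1}$ squares. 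The bound ``each square meets $\leq K$ cylinders'' goes the wrong way for this count. A similar issue affects the Type-2 estimate: a ``large jump'' is a $y$-value, and a given $y$-value can lie in the $y$-range of up to $\asymp\gamma^{-1}$ disjoint $\gamma$-squares, not $O(1)$ (the squares are disjoint in the plane, but their $y$-projections are not). Your double-counting ``every Type-2 bad square accounts for at least two of the $J$ jumps'' therefore does not yield $\lesssim J$ bad squares.

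Both issues are repairable with extra work. For Type-1 one can bound $\sum_{\ii\text{ thick}}\beta_1(\ii)$ by a second large-deviations estimate (the key identity is $D(p)/\phi(p)=D_x(p)\leq t<1$, so $\phi(p)-D(p)>0$ uniformly). For Type-2 one must distinguish ``medium'' jumps ($\gamma^\beta\lesssim\beta_1(\mathbf{k})\lesssim\gamma$), for which a square meeting both adjacent cylinders is pinned to an $O(\gamma)\times O(\gamma)$ region and hence there are $O(1)$ such squares, from ``very large'' jumps ($\beta_1(\mathbf{k})\gtrsim\gamma$), across which no single $\gamma$-square can reach both sides at all. But none of this is in your write-up.

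\textbf{Comparison with the paper.} The paper avoids all of this by stopping at the \emph{width} scale rather than the height scale: it takes the cut-set $\mathcal{J}(\gamma^\beta)=\{\ii:f(\ii)\leq\gamma^\beta<f(\ii^-)\}$, so every cylinder is automatically thin, and uses a single Cram\'er estimate to show that all but a $\gamma^\kappa$-fraction have height $\geq\gamma$. Since $b+d=1$ the $y$-ranges of cylinders in any cut-set partition $[0,1]$; hence a square that avoids all ``short'' cylinders has its $y$-range covered by at most two consecutive ``tall'' cylinder $y$-intervals, and those two cylinders (each of width $\leq\gamma^\beta$) give the two rectangles directly. Crucially, the bad cylinders in the paper's approach have \emph{both} width and height below $\gamma$, so each meets at most four squares, and the square count follows immediately from the cylinder count. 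This sidesteps precisely the wide-thick-cylinder problem that breaks your Type-1 step, and it makes your entire ``large jump'' analysis unnecessary.
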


\begin{proof}
Write  $\tau= \frac{\log a-\log c}{\log d- \log b}$. Since $\frac{\log \lambda_2}{\log \lambda_1}>1$ we can fix $C>1$ sufficiently small so that 
$$\beta:= \frac{\log(\lambda_2 C^\tau)}{\log(\lambda_1/C)}>1.$$

Let $\gamma>0$ and fix a cover $\mathcal{C}_{\gamma}$ of $F'$ by disjoint squares of side $\gamma$.  Define
\[
\mathcal{J}(\gamma^{\beta})=\left\{\ii \in \mathcal{I}_2^*: f(\ii) \leq \gamma^{\beta} < f(\ii^-)\right\}
\]
 and note that for all $\ii \in \mathcal{J}(\gamma^{\beta})$, we have $\mu([\ii]) \approx \gamma^{\beta t}$ and $\beta \frac{\log \gamma}{\log a} \leq |\ii| \leq \beta \frac{\log \gamma}{\log c}$.  By Cram\'er's large deviations theorem for sums of i.i.d random variables \cite{cramer}, there exists $r \in (0,1)$ such that for all $n \geq 1$
\begin{eqnarray}
\mu\left(\left\{ \ii \in \Sigma_2 : \frac{\lambda_1^k}{g(\ii|_k)}> C^k  \textnormal{ for some $k \geq n$}\right\}\right) \leq r^n. \label{ld}
\end{eqnarray}
Define 
$$\mathcal{J}_{bad}(\gamma^{\beta})=\left\{\ii \in \mathcal{J}(\gamma^{\beta}): \frac{\lambda_1^{|\ii|}}{g(\ii)}>C^{|\ii|}\right\}.$$
By (\ref{ld}),
\[
\mu\left(\bigcup_{\ii \in \mathcal{J}_{bad}(\gamma^{\beta})}[\ii]\right) \leq r^{\frac{\beta \log \gamma}{\log a}}= \gamma^{\beta \frac{\log r}{\log a}}
\]
and let $\kappa= \beta \frac{\log r}{\log a}>0$. Since for all $\ii \in \mathcal{J}(\gamma^{\beta})$, we have $\mu([\ii]) \approx \gamma^{\beta t}$ and $\mu\left(\bigcup_{\ii \in \mathcal{J}(\gamma^{\beta})}[\ii]\right)=1$ it follows that
$$\frac{\# \mathcal{J}_{bad}(\gamma^{\beta})}{\# \mathcal{J}(\gamma^{\beta})} \lesssim \gamma^{\kappa}.$$
Let $\ii \in \mathcal{J}_{bad}(\gamma^{\beta})$,  write $n=|\ii|$ and let $n_1$ be the number of 1s that appear in the word $\ii$. Then, since $g(\ii)=b^{n_1}d^{n-n_1}< \frac{\lambda_1^n}{C^n}$, it follows that
$$n_1>n \frac{\log\left(\frac{\lambda_1}{Cd}\right)}{\log \left(\frac{b}{d}\right)}.$$
Therefore, recalling that $\lambda_1= b^{a^t}d^{c^t}$, $ \lambda_2=  a^{a^t}c^{c^t}$, and $a^t+c^t=1$,
\begin{eqnarray*}
f(\ii) = a^{n_1}c^{n-n_1}> \left(\frac{a}{c}\right)^{n \frac{\log\left(\frac{\lambda_1}{Cd}\right)}{\log \left(\frac{b}{d}\right)}} c^n  &=& a^{n(a^t+ \frac{\log(1/C)}{\log(b/d)})}c^{n(c^t+1-\frac{\log(b/Cd)}{\log(b/d)})}  \\ 
&=& \lambda_2^n C^{n(\frac{\log c-\log a}{\log b- \log d})}\\
 &=& \lambda_2^n C^{\tau n}\\
&=&(\lambda_2C^{\tau})^{|\ii|}.\end{eqnarray*}
In particular, $\gamma^{\beta} \gtrsim \lambda_2^n C^{\tau n}$ and so $|\ii|=n> \frac{\beta \log \gamma}{\log(\lambda_2 C^{\tau})}$.  This implies that if $\ii \in \mathcal{J}_{bad}(\gamma^{\beta})$ then 
\[
g(\ii)< \left(\frac{\lambda_1}{C}\right)^{\frac{\beta \log \gamma}{\log(\lambda_2 C^{\tau})}}=(\gamma^{\beta})^{\frac{\log(\lambda_1/C)}{\log(\lambda_2 C^{\tau})}}=\gamma^{\beta \cdot \beta^{-1}}=\gamma
\]
 and if $\ii \in \mathcal{J}(\gamma^{\beta}) \setminus \mathcal{J}_{bad}(\gamma^{\beta})$ then $g(\ii)\geq\gamma$. Therefore, given any $\ii \in \mathcal{J}_{bad}(\gamma^{\beta})$ and $\jj  \in \mathcal{J}(\gamma^{\beta}) \setminus \mathcal{J}_{bad}(\gamma^{\beta})$, the number of squares in $\mathcal{C}_{\gamma}$ that $T_{\ii}([0,1]^2)$ intersects can be at most 4 times the number of squares in $\mathcal{C}_{\gamma}$ that $T_{\jj}([0,1]^2)$ intersects. Therefore, denoting $\mathcal{C}_{\gamma}^{good}$ as the collection of squares in $\mathcal{C}_{\gamma}$ which do not intersect $T_{\ii}([0,1]^2)$ for any $ \ii \in \mathcal{J}_{bad}(\gamma^{\beta})$, we have
$$\frac{\# \mathcal{C}_{\gamma}^{good}}{\# \mathcal{C}_{\gamma}} \gtrsim 1-\gamma^{\kappa}.$$
In particular, since any square $B \in \mathcal{C}_{\gamma}^{good}$ can only intersect $T_{\ii}([0,1]^2)$ for $\ii \in \mathcal{J}(\gamma^{\beta})$ which necessarily has height $g(\ii) \geq \gamma$, it follows that there can be at most two distinct words $\ii, \jj \in \mathcal{J}(\gamma^{\beta})$ such that $B \cap T_{\ii}([0,1]^2) \neq \emptyset$ and $B \cap T_{\jj}([0,1]^2) \neq \emptyset$. By definition, $T_{\ii}([0,1]^2)$ and $T_{\jj}([0,1]^2)$ have widths $f(\ii), f(\jj ) \leq \gamma^{\beta}$. In particular, setting $B_1= B \cap T_{\ii}([0,1]^2)$ and $B_2=B \cap T_{\jj}([0,1]^2)$ we see that $B \in \mathcal{C}_{\gamma}^{\beta}$ and so
$$\frac{\# \mathcal{C}_{\gamma}^{\beta}}{\# \mathcal{C}_{\gamma}} \geq \frac{\# \mathcal{C}_{\gamma}^{good}}{\# \mathcal{C}_{\gamma}} \gtrsim 1-\gamma^{\kappa}$$
completing the proof.
\end{proof}

\subsection{Proof of dimension drop}

We now prove Theorem \ref{counter}. By using Lemma \ref{baranski lma} we show that for typical $\ii \in \mathcal{I}(\delta)$, most of the rectangles defined as `images under $S_{\ii}$' of covering boxes in the projection can be covered by only two boxes, rather than $\left(\frac{\alpha_1(\ii)}{\alpha_2(\ii)}\right)^t$ boxes as predicted by $\psi^s(\ii)$. By our assumptions on the defining parameters, this will imply that $\ubd F < s_0$. 

\begin{thm}[Refinement of Theorem \ref{counter}]
Let $F$ be the self-affine sponge that satisfies assumptions (i)-(iii) and additionally $N$ is taken large enough such that for all $\ii \in \mathcal{I}^{\ast}$,
\begin{eqnarray}
\left(\frac{\alpha_3(\ii)}{\alpha_2(\ii)}\right)^{\omega}=\left(\frac{N^{-|\ii|}}{\alpha_2(\ii)}\right)^{\omega} \leq \frac{\alpha_2(\ii)}{\alpha_1(\ii)} \label{omega}
\end{eqnarray}
for $\omega= \min\{\beta-1, \frac{\kappa}{t}\}$. Then $\ubd F < s_0$.
\end{thm}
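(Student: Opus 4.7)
The plan is to strengthen the upper bound given by Lemma \ref{keycover1} by exploiting Lemma \ref{baranski lma}: most $\gamma$-boxes covering $F'$ enjoy the $\beta$-property, hence are so thin in the $x$-direction that after applying the $xy$-restriction $\Phi_\ii$ of $S_\ii$, their image fits inside a single $\delta$-square, rather than the $(\alpha_1/\alpha_2)^t$ many $\delta$-squares predicted by the standard argument. This saving, which arises from the orientation mismatch between typical cylinders of $F'$ and those of $\Phi_\ii F' = \pi_\ii^2 S_\ii F$, will produce a strict drop below the pressure root $s_0$.

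Fix $\delta \in (0,1)$. For each $\ii \in \mathcal{I}(\delta)$, let $k$ be the number of occurrences of the symbol $N+1$ in $\ii$, write $n = |\ii|$, $X_\ii = a^{n-k}c^k$, and $Y_\ii = b^{n-k}d^k$; then $\alpha_3(\ii) = N^{-n}$ and $\{\alpha_1(\ii),\alpha_2(\ii)\} = \{X_\ii,Y_\ii\}$. Call $\ii$ \textbf{Case A} when $X_\ii \geq Y_\ii$, equivalently $k \leq n\log(a/b)/\log(ad/bc)$, and \textbf{Case B} otherwise. Only in Case A does the ordering of $S_\ii([0,1]^3)$ disagree with that of typical cylinders of $F'$ (whose $y$-direction dominates by assumption (i)); correspondingly, $p_1(\ii) = t$ in Case A and $p_1(\ii) = 1$ in Case B.

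For each Case A word, cover $F'$ by a disjoint family $\mathcal{C}_\gamma$ of squares of side $\gamma = \delta/Y_\ii$, with $|\mathcal{C}_\gamma| \lesssim_\eta \gamma^{-1-\eta}$. By Lemma \ref{baranski lma}, at most a $\gamma^\kappa$-fraction of $\mathcal{C}_\gamma$ fail the $\beta$-property. For each good box, the $\beta$-property confines $F'$ to at most two rectangles of width $\leq \gamma^\beta$ and height $\leq \gamma$; their $\Phi_\ii$-images have dimensions $\gamma^\beta X_\ii \times \delta$, and hypothesis (\ref{omega}) combined with $\beta-1 \geq \omega$ gives $\gamma^\beta X_\ii = \gamma \cdot \gamma^{\beta-1} X_\ii \leq \gamma \cdot \gamma^\omega X_\ii \leq \gamma Y_\ii = \delta$, so each good box contributes only $O(1)$ many $\delta$-squares. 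For bad boxes, the standard estimate of Lemma \ref{keycover1} gives $\lesssim_\eta (X_\ii/Y_\ii)^{t+\eta}$ $\delta$-squares apiece; using $\omega \leq \kappa/t$ in (\ref{omega}) yields $\gamma^\kappa(X_\ii/Y_\ii)^t \lesssim 1$, so the bad contribution is absorbed into the good one. Hence $N_\delta(S_\ii F) \lesssim_\eta (Y_\ii/\delta)^{1+\eta}$ when $\ii$ is in Case A. For Case B the good-box trick fails because $\gamma Y_\ii > \delta$, but Lemma \ref{keycover1} directly yields $N_\delta(S_\ii F) \lesssim_\eta (Y_\ii/\delta)\delta^{-\eta}$.

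Summing over $\ii \in \mathcal{I}(\delta)$ (with $n \approx \log_N(1/\delta)$),
\[
N_\delta(F) \lesssim_\eta \delta^{-(1+\eta)}\sum_{|\ii|=n} Y_\ii \leq \delta^{-(1+\eta)}(Nb+d)^n \lesssim_\eta \delta^{-(2+\log_N(b+d/N)+\eta)},
\]
where Case A provides the bulk and Case B is exponentially smaller by a Chernoff bound: the empirical mean of $k$ under the $(Nb,d)$-weighting is $\approx nd/(Nb)$, well below the Case B threshold for $N$ large by assumption (iii). Since $a > b$, for $N$ large by (iii) one has $a^tb^{1-t} + c^td^{1-t}/N > b + d/N$, so $s_0 = 2 + \log_N(a^tb^{1-t}+c^td^{1-t}/N) > 2 + \log_N(b+d/N) \geq \ubd F$ after letting $\eta \to 0$. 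The main obstacle is the Case A analysis: the parameter $\omega = \min\{\beta-1,\kappa/t\}$ is chosen precisely so that the good-box image-size estimate (requiring $\omega \leq \beta-1$) and the bad-box absorption (requiring $\omega \leq \kappa/t$) both succeed, converting the ``thinness'' of $\beta$-boxes into an honest dimension drop against the orientation mismatch.
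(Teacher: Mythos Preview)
Your argument is sound and uses the same key mechanism as the paper: for words with $X_\ii\ge Y_\ii$ you invoke Lemma \ref{baranski lma} so that all but a $\gamma^\kappa$-fraction of the $\gamma$-boxes covering $F'$ become, after applying $\Phi_\ii$, thin enough to fit in $O(1)$ many $\delta$-squares, and you absorb the exceptional boxes via (\ref{omega}) exactly as you describe (the two roles of $\omega$ are identified correctly). The paper organises the argument differently: it calls $\ii$ \emph{good} only when $Y_\ii/X_\ii\le N^{-n\eta}$ for a fixed small $\eta>0$ (a strict subset of your Case A), compares the good sum with $\sum\psi^s(\ii)$ picking up a saving of $\delta^{\eta}$, and handles the complementary \emph{bad} words by the crude bound $N_\delta(S_\ii F)\lesssim \alpha_1(\ii)\delta^{-1}$ together with the observation that bad words force $k\ge nq$. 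Your split is cleaner and yields the explicit estimate $\ubd F\le 2+\log_N(b+d/N)$. Note that the Chernoff remark is unnecessary: both Case A and Case B already satisfy $N_\delta(S_\ii F)\lesssim_\eta Y_\ii\,\delta^{-(1+\eta)}$ (in Case A because $Y_\ii<1$, in Case B directly), so you may simply sum $\sum_{|\ii|=n}Y_\ii=(Nb+d)^n$ over all words.

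There is one genuine but minor gap at the very end. You conclude by asserting $a^tb^{1-t}+c^td^{1-t}/N>b+d/N$ ``for $N$ large by (iii)''. This inequality is equivalent to $N>(d-c^td^{1-t})/(a^tb^{1-t}-b)$, which is indeed true for all sufficiently large $N$ since $a^tb^{1-t}>b$; but it is \emph{not} what is encoded in hypothesis (iii). Assumption (iii) is tailored to the paper's own decomposition---it is precisely what makes the paper's bad contribution satisfy $2-q+\log_N(2d)<s_0$---and does not imply your inequality. So either add this harmless extra largeness condition on $N$ (which is entirely in the spirit of the construction), or, if you want to prove the theorem with the hypotheses exactly as stated, adopt the paper's good/bad split instead.
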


\begin{proof}
Let $f', g' : \mathcal{I}^{\ast} \to (0,1)$ be defined by the identity
$$ A_{\ii}= \begin{pmatrix} f'(\ii) &0 &0\\0& g'(\ii)&0 \\ 0&0& N^{-|\ii|}\end{pmatrix}.$$
Define $\eta=\frac{\eta'}{\log N}$ and
$$\mathcal{I}_{good}(\delta)=\left\{\ii \in \mathcal{I}(\delta): \frac{g'(\ii)}{f'(\ii)} \leq N^{-|\ii|\eta}\right\}$$
and
$$\mathcal{I}_{bad}(\delta)=\{\ii \in \mathcal{I}(\delta): \frac{g'(\ii)}{f'(\ii)} >N^{-|\ii|\eta}\}.$$
In particular, notice that if $\ii \in \mathcal{I}_{good}(\delta)$ then $\frac{\alpha_2(\ii)}{\alpha_1(\ii)}=\frac{g'(\ii)}{f'(\ii)} \leq N^{-|\ii|\eta}$. Note that
\begin{eqnarray}
 N_{\delta}(F) \leq \sum_{\ii \in \mathcal{I}_{good}(\delta)} N_{\delta}(S_\ii(F))+  \sum_{\ii \in \mathcal{I}_{bad}(\delta)} N_{\delta}(S_\ii(F)). \label{2sums}
\end{eqnarray}
We begin by bounding the first sum in (\ref{2sums}). Fix $\ii \in \mathcal{I}_{good}(\delta)$. By taking $\gamma= \frac{\delta}{\alpha_2(\ii)}$ in Lemma \ref{baranski lma}, it follows that we can take a cover of $F' = S_{\ii}^{-1}\pi_{\ii}^2 S_{\ii}F$ by: $\lesssim \frac{\alpha_2(\ii)}{\delta}$ squares of side $\frac{\delta}{\alpha_2(\ii)}$ with the $\beta$-property and $\lesssim \left(\frac{\alpha_2(\ii)}{\delta}\right)\left(\frac{\delta}{\alpha_2(\ii)}\right)^{\kappa}$ squares of side $\frac{\delta}{\alpha_2(\ii)}$ without the $\beta$-property. First, consider the image under $S_{\ii}$ of a square with the $\beta$-property, which is a rectangle with sidelengths $\delta \frac{\alpha_1(\ii)}{\alpha_2(\ii)}$ and $\delta$. If $B_1$ and $B_2$ are the rectangles from the definition, then $S_{\ii}B_1$ and $S_{\ii}B_2$ are rectangles of height $\delta$ and width at most
$$\left(\frac{\delta}{\alpha_2(\ii)}\right)^{\beta} \alpha_1(\ii) \leq \frac{\delta}{\alpha_2(\ii)} \alpha_1(\ii) \frac{\alpha_2(\ii)}{\alpha_1(\ii)}= \delta,$$
where the first inequality follows by (\ref{omega}). Therefore $S_{\ii}B$ can be covered by 2 squares of sidelength $\delta$.

Next, consider a square $B$ without the $\beta$-property. As in Lemma \ref{keycover1}, $S_{\ii}B$ can be covered by $\lesssim \left(\frac{\alpha_1(\ii)}{\alpha_2(\ii)}\right)^t$ squares of side $\delta$.  Therefore, for arbitrary $s>s_0$, we can bound the first sum appearing in (\ref{2sums}) by
\begin{eqnarray}
&\,& \hspace{-15mm} \sum_{\ii \in \mathcal{I}_{good}(\delta)} N_{\delta}(S_\ii(F))  \nonumber \\ \nonumber \\
&\lesssim &  \sum_{\ii \in \mathcal{I}_{good}(\delta)} 2\alpha_2(\ii)\delta^{-1}+ \sum_{\ii \in \mathcal{I}_{good}(\delta)}\alpha_2(\ii)\delta^{-1} \left(\frac{\delta}{\alpha_2(\ii)}\right)^{\kappa} \left(\frac{\alpha_1(\ii)}{\alpha_2(\ii)}\right)^t \nonumber \\
&\lesssim & \delta^{-s}  \sum_{\ii \in \mathcal{I}_{good}(\delta)} \psi^s(\ii) \left(\frac{\alpha_2(\ii)}{\alpha_1(\ii)}\right)^t \nonumber \\ 
& \lesssim & \delta^{\eta-s} \sum_{\ii \in \mathcal{I}^{\ast}} \psi^s(\ii) \nonumber \\ \nonumber \\
&\lesssim& \delta^{\eta-s} \label{good} \end{eqnarray}
where the second line follows by (\ref{omega}).

Next we bound the second sum in (\ref{2sums}). Fix $\delta>0$ and notice that $\mathcal{I}(\delta)=\mathcal{I}^n$ for some $n$. In particular, $\ii \in \mathcal{I}_{bad}(\delta)$ if and only if the number $k$ of times that the symbol $(N+1)$ appears in the word $\ii$ satisfies
$$\frac{b^{n-k}d^k}{a^{n-k}c^k} \geq N^{-n \eta}$$
that is, $k \geq nq$, where $q$ is defined above in \eqref{defq}. Therefore, using Lemma \ref{keycover1} we obtain
\begin{eqnarray}
\sum_{\ii \in \mathcal{I}_{bad}(\delta)} N_{\delta}(S_{\ii}(F)) &\lesssim & \sum_{\ii \in \mathcal{I}_{bad}(\delta)} \alpha_1(\ii) \delta^{-1} \nonumber\\
&=& \delta^{-1} \sum_{k=qn}^n {n \choose k} N^{n-k} \max\{d^kb^{n-k}, c^k a^{n-k}\} \nonumber \\
&\leq& \delta^{-1} (2N^{1-q}d)^n \nonumber \\
&\approx& \delta^{-1} \delta^{-\frac{\log 2dN^{1-q}}{\log N}}. \label{bad}
\end{eqnarray}
By (\ref{2sums}), (\ref{good}), (\ref{bad}) and assumption (iii) we see that
$$\ubd F \leq \max\left\{s_0- \eta,  \, 2-q+\frac{\log 2d}{\log N}\right\}<s_0,$$
as required. 
\end{proof}

\section{A planar example exhibiting discontinuity of the pressure and dimension as functions of affinities} \label{discont}

Throughout this short section we always assume that the (families of) IFS that we consider all satisfy suitable separation conditions such as the strong open set condition.

 It is well-known that the dimension of a self-affine set need not depend continuously on the \emph{translational parts} of the defining affinity maps, when the linear parts are fixed. A classical example of this fact is the IFS  given by $\{T_1, T_2\}$ where
\begin{equation*}
T_i\begin{pmatrix}x\\y\end{pmatrix}=
\left\{ \begin{array}{cc}

\begin{pmatrix}1/2 &0\\0&1/3\end{pmatrix}\begin{pmatrix}x\\y\end{pmatrix}+\begin{pmatrix}0\\0\end{pmatrix} & i=1  \\
\begin{pmatrix}1/2&0\\0&1/3\end{pmatrix}\begin{pmatrix}x\\y\end{pmatrix}+\begin{pmatrix}\eps\\ 2/3\end{pmatrix} & i=2  
\end{array} \right.
\end{equation*}
for $\eps \in [0,1/2)$. The attractor of this system is easily seen to have box and Hausdorff dimension equal to 1 for $\eps>0$ and equal to $\log 2/\log 3$ for $\eps=0$. This discontinuity arises because the matrices defining the IFS have been fixed as generalised permutation matrices. In contrast, for generic choices of matrices, the dimension will be \emph{constant} under changes in the translations, and therefore will be (trivially) continuous in the translational parts. For example,  if the matrices are fixed to satisfy \cite[Theorem 1.1]{barany} then the Hausdorff and box dimensions are constant (thus continuous) in the translations.

Recently, there has been some interest in the regularity of the dimension as a function of the \emph{linear parts} of the defining affinity maps, where this time the translations are fixed. Feng and Shmerkin \cite{fengshmerkin} proved that the standard subadditive pressure function associated to matrix cocycles (and heavily used in the dimension theory of self-affine sets) is continuous in the matrices.  In particular this shows the affinity dimension varies continuously in the linear parts of the defining affinities, since it is defined as the zero of the pressure.  This answered a question of Falconer and Sloan who proved the result with some strong assumptions a few years earlier \cite{falconersloan}.  Morris gave an alternative proof of the result of Feng and Shmerkin in \cite{morris}, and recently it was also shown that in certain planar settings the affinity dimension is even analytic in the matrix coefficients of the defining affinities \cite{jm}.

As a consequence of \cite{fengshmerkin}, one can deduce that the dimension of a self-affine set is continuous in the linear parts of the defining affinities on `generic' parts of $\mathcal{GL}_2(\R)$. For example, the Hausdorff and box dimensions of the associated attractor are continuous in the linear parts of the affinities on subsets of $\mathcal{GL}_2(\R)$ which satisfy the assumptions of \cite[Theorem 1.1]{barany}.

In this section we provide a simple example of a planar self-affine system of the type studied in \cite{fraser} for which the box dimension of the attractor and associated pressure function are \emph{not} continuous in the defining matrices, for a fixed set of translations.  Note that the discontinuity is not caused by dimension drop in the projection.

  Let $a \in (1/3,1/2)$, $\eps \in [0,1/10]$ and consider the IFS 
$\{S_1, S_2,S_3, S_4, S_5\}$ where:
\begin{equation*}
S_i\begin{pmatrix}x\\y\end{pmatrix}=
\left\{ \begin{array}{ccc}

\begin{pmatrix}a&0\\0&1/3\end{pmatrix}\begin{pmatrix}x\\y\end{pmatrix}+\begin{pmatrix}0\\(i-1)/3\end{pmatrix} & i=1, 2, 3  \\
\begin{pmatrix}a&0\\0&1/3\end{pmatrix}\begin{pmatrix}x\\y\end{pmatrix}+\begin{pmatrix}1-a\\0\end{pmatrix} & i=4  \\
\begin{pmatrix}0&\eps\\\eps&0\end{pmatrix}\begin{pmatrix}x\\y\end{pmatrix}+\begin{pmatrix}9/10\\ 9/10\end{pmatrix}& i=5.\end{array} \right.
\end{equation*}

Recall from \cite{fraser} that the box dimension of the attractor of the above system is given as the root of the associated pressure function 
$$P(s):= \lim_{k \to \infty}\left(\sum_{\ii \in \mathcal{I}^k} \alpha_1(\ii)^{p_1(\ii)}\alpha_2(\ii)^{s-p_1(\ii)}\right)^{\frac{1}{k}}$$ where $\mathcal{I}=\{1,2,3,4,5\}$ and $p_1(\ii)$ is the box dimension of the projection of the attractor onto the one-dimensional subspace parallel to the longest side of the rectangle $T_{\ii}([0,1]^2)$.  We prove that the pressure of the above system is discontinuous at $\eps=0$, which immediately implies a discontinuity of the box dimension at $\eps=0$. We write $\mathcal{J} = \{1,2,3,4\}$.  First consider the situation where $\eps>0$.  Since the dimension of the projection onto the 2nd coordinate is clearly 1, and for $\eps>0$ the system is irreducible, the dimension of the projection onto the first coordinate is also 1 and therefore the pressure is given by
\begin{eqnarray*}
P(s)  \ = \  \lim_{k \to \infty} \left( \sum_{\ii \in \mathcal{I}^k} \alpha_1(\ii) \alpha_2(\ii)^{s-1} \right)^{1/k}  
 \ \geq \  \lim_{k \to \infty} \left( \sum_{\ii \in \mathcal{J}^k} a^k (1/3^k)^{s-1} \right)^{1/k} 
\ = \  4 a 3^{1-s}.
\end{eqnarray*}

Now, for $\eps=0$, the dimension of the projection of the attractor onto the 1st coordinate is
\[
p=\frac{\log 2}{\log (1/a)}<1
\]
and the pressure is 
\begin{eqnarray*}
P(s) \ = \ \lim_{k \to \infty} \left( \sum_{\ii \in \mathcal{I}^k} \alpha_1(\ii)^{p_1(\ii)} \alpha_2(\ii)^{s-p_1(\ii)} \right)^{1/k} 
&= & \lim_{k \to \infty} \left( \sum_{\ii \in \mathcal{J}^k} a^{kp} (1/3^k)^{s-p} \right)^{1/k} \\
&= & 4 a^p 3^{p-s} \\ 
&<& 4 a 3^{1-s}
\end{eqnarray*}
as required. 

\vspace{1cm}

\begin{centering}

\textbf{Acknowledgements}

JMF was financially supported by  an \emph{EPSRC Standard Grant} (EP/R015104/1). NJ was financially supported by a \emph{Leverhulme Trust Research Project Grant} (RPG-2016-194). The authors thank Ian Morris for suggesting the subsystem approach used in the proof of Lemma \ref{carries lemma} which allowed significant improvements to the exposition of the paper.
\end{centering}

\end{document}